\numberwithin{equation}{section}
\newtheorem{Thm}{Theorem}[section]
\newtheorem{Prop}[Thm]{Proposition}
\newtheorem{Lem}[Thm]{Lemma}
\newtheorem{Cor}[Thm]{Corollary}
\newtheorem{Fact}[Thm]{Fact}
\theoremstyle{remark}
\newtheorem{Rem}[Thm]{Remark}
\theoremstyle{definition}
\newtheorem{Def}[Thm]{Definition}
\newtheorem*{Def*}{Definition}
\numberwithin{equation}{section}
\newcommand{\g}[1]{{\mbox{\goth #1}}}
\newcommand{\m}[1]{\mathbb{ #1}}
\newcommand{\mc}[1]{\mathcal{ #1}}
\newcommand{\gs}[1]{{\mbox{\gots #1}}}
\newcommand{\Ind}{{\operatorname{Ind}}}
     \def\ol{\overline}    
   \def\wt{\widetilde}
\def\al{\alpha}               
       \def\la{\lambda}      
\def\si{\sigma}                
              \def\De{\Delta}
             \def\Ph{\Phi}
\newcommand{\rmd}{{\,\rm d}}
\theoremstyle{definition}
\theoremstyle{remark}
\newtheorem{Rmq}[Thm]{Remark}
\newtheorem{conjecture}[Thm]{Conjecture}
\newtheorem{question}[Thm]{Question}
\numberwithin{equation}{section}
\newfont{\goth}{eufm10 at 12pt}
\newfont{\gots}{eufm8 at 9pt}
\def\bt{\begin{Thm}}
\def\et{\end{Thm}}
\def\br{\begin{Rmq}}
\def\er{\end{Rmq}}
\def\bc{\begin{Cor}}
\def\ec{\end{Cor}}
\def\bp{\begin{Prop}}
\def\ep{\end{Prop}}
\def\bl{\begin{Lem}}
\def\el{\end{Lem}}
\def\bd{\begin{Def}}
\def\ed{\end{Def}}
\def\bq{\begin{quotation}}
\def\eq{\end{quotation}}
\def\bfa{\begin{Fact}}
\def\efa{\end{Fact}}
\def\ra{\rightarrow}
\def\vs{\vspace{1em}}
\begin{document}

\title{
Tempered homogeneous spaces IV
}

\author{Yves Benoist and Toshiyuki Kobayashi
}
\date{}
\maketitle


\begin{abstract}{
\noindent Let $G$ be a complex semisimple Lie group 
and $H$ a complex closed connected subgroup.
Let   $\g g$ and  $\g h$ be their Lie algebras.
We prove that  the  regular representation  of $G$
in $L^2(G/H)$ is tempered if and only 
if the ortho\-gonal of $\g h$ in $\g g$
contains regular elements
by showing simultaneously the equivalence
 to other striking conditions
 such as $\g h$ has a solvable limit algebra.  
}\end{abstract}


{\footnotesize \tableofcontents}


\section{Introduction}
\label{secintrod}

Let $X=G/H$ be a homogeneous space of a Lie group $G$.  
This article 
 is the fourth one in our series of papers \cite{BeKoI, BeKoII, BeKoIII} dealing with a question
 about when $L^2(X)$ is tempered,
{\it{i.e.}},  to be weakly contained in the regular representation in $L^2(G)$.
We proved in \cite{BeKoI, BeKoII}
 a criterion \eqref{eqntemrho} below by an analytic and dynamical approach
 when $G$ is real reductive, 
 and accomplished in \cite{BeKoIII} a classification
 of all the pairs $(G,H)$
 of real reductive Lie groups
 for which $L^2(X)$ is non-tempered.  
We refer to the introduction of both \cite{BeKoI} and \cite{BeKoII}
 for some motivations and perspectives on this question.

In this article 
 we find a striking relationship 
 of this question with other
disciplines such as a topological condition
 concerning the \lq\lq{limit subalgebras}\rq\rq\
 and a geometric condition concerning coadjoint orbits.  
The relationship is perfect 
 when $G$ is complex reductive
 (Theorem \ref{thmequcon}).  
For the proof, 
 we explore the temperedness of $L^2(X)$
 beyond reductive setting
 (Theorem \ref{thm:1.1}).  

\subsection{Real homogeneous spaces}
\label{secintrea}

\bq
We extend the criterion in \cite{BeKoI, BeKoII}
 for the temperedness of $L^2(X)$
 to the general setting 
 where $X$ is a homogeneous of a {\it{real}} Lie group
 which is {\it{not}} necessarily reductive.  
\eq

In the first two papers \cite{BeKoI} and \cite{BeKoII}, 
we first noticed that the property of $L^2(G/H)$
 being tempered depends only on the pair $(\g g, \g h)$
 of Lie algebras, 
and introduced 
for an $\g h$-module $V$ and $Y\in \g h$, the quantity
\begin{eqnarray*}
\rho_V(Y)&:=&\mbox{
half the sum of the absolute values of the }\\ 
&&
\mbox{ real part of the eigenvalues of $Y$ in $V$.}
\end{eqnarray*}
We found the following temperedness criterion
 when $G$ is a connected semisimple Lie group with finite center, 
 and $H$ is a connected closed subgroup:
\begin{equation}
\label{eqntemrho}
 L^2(G/{H})\; \mbox{\rm is tempered}
\;\;\;\Longleftrightarrow \;\;\;
\rho_{\gs h}\leq \rho_{\gs g/\gs h} \mbox{\rm{ on ${\mathfrak{h}}$.}  }
\end{equation}
This  criterion \eqref{eqntemrho} 
was proven in  \cite{BeKoI}
 when $\g h$ is assumed to be semisimple
 by a dynamical approach, 
 and was extended in \cite{BeKoII}
 to arbitrary ${\mathfrak{h}}$
 by an idea of \lq\lq{domination of $G$-spaces}\rq\rq.  
Developing the techniques in a more general setting, 
 we extend \eqref{eqntemrho} without any reductive assumptions
 of ${\mathfrak{g}}$ and ${\mathfrak{h}}$:
\begin{Thm}
[see Theorem \ref{thmgh}]
\label{thm:1.1}
Let $G$ be a real algebraic Lie group, 
 and $H$ an algebraic subgroup.  
We fix maximal reductive subgroups $G_{\operatorname{s}}$
 and $H_{\operatorname{s}}$
 of $G$ and $H$, 
 respectively, 
 such that $H_{\operatorname{s}} \subset G_{\operatorname{s}}$.  
Then one has the equivalence:
\[
  \text{$L^2(G/H)$ is $G_{\operatorname{s}}$-tempered
$\Leftrightarrow$
 $\rho_{\mathfrak{g}_{\operatorname{s}}} \le 2 \rho_{\mathfrak{g}/\mathfrak{h}}$ on ${\mathfrak{h}}_{\operatorname{s}}$.  }
\]
\end{Thm}

Theorem \ref{thm:1.1} (and its further generalization 
 to the Hilbert bundle valued case)
 serves as a \lq\lq{tool}\rq\rq\
 in proving the relationship
 with other discipines, 
 which is formulated in Theorem \ref{thmequcon} below.  

\subsection{Temperedness condition and the orbit philosophy}
\bq
We discuss what the orbit philosophy 
suggests about the geometry of coadjoint orbits
 \lq\lq{corresponding to}\rq\rq\
 the temperedness condition of $L^2(G/H)$.  
\eq

Let ${\mathfrak{g}}$ be the Lie algebra 
 of a Lie group $G$, 
 and ${\mathfrak{g}}^{\ast}$ its dual.  
We denote by $\widehat G$ the unitary dual of $G$, 
 {\it{i.e.}}, 
 the set of equivalence classes
 of irreducible unitary representations of $G$.  
The orbit philosophy due to Kirillov--Kostant--Duflo
 expects an intimate connection of the unitary dual $\widehat G$
 with the set of coadjoint orbits
 ${\mathfrak{g}}^{\ast}/\operatorname{Ad}^{\ast}(G)$.  
This works perfectly 
 for simply connected nilpotent groups, 
 but does not exactly for semisimple Lie groups.  
Nevertheless, 
 ${\mathfrak{g}}^{\ast}/\operatorname{Ad}^{\ast}(G)$
 may be considered to be a fairly good approximation
 as a parameter set of $\widehat G$.  
As an expected functionality, 
 the orbit philosophy also suggests
 that the disintegration of $L^2(G/H)$ would be supported
 on the subset of $\widehat G$
 \lq\lq{corresponding to}\rq\rq\ the closure of $\operatorname{Ad}^{\ast}
(G){\mathfrak{h}}^{\perp}/\operatorname{Ad}^{\ast}(G)$
 where $\g h^{\perp}:= \operatorname{Ker}(\g g^{\ast} \to \g h^{\ast})$.  
On the other hand, 
 for a connected semisimple Lie group $G$, 
 loosely speaking, 
 irreducible tempered representations
 of $G$ 
 are supposed to be obtained 
 as \lq\lq{geometric quantization}\rq\rq\
 of semisimple coadjoint orbits
 having amenable isotropy subgroups.  
Thus one expects
 that the temperedness of the unitary representation $L^2(G/H)$ may be characterized
 by its \lq\lq{classical limit}\rq\rq\
 in the geometry of coadjoint orbits
 via the orbit philosophy.  
When $G$ is a complex Lie group, 
 we formulate a precise criterion below from this viewpoint.

\subsection{Complex homogeneous spaces}
\label{secintcom}
\bq
In the third paper \cite{BeKoIII} and in this one, 
we extend and deepen the theory of tempered homogeneous spaces
with focus on the complex setting.  
\eq
Suppose $\g g$ is a complex semisimple algebra.  
Via the Killing form
$$K(X,Y):={\rm tr}({\rm ad}X\,{\rm ad}Y), $$
we identify $\g g^{\ast}$ with $\g g$, 
 and $\g h^{\perp}$ with the orthogonal subspace of $\g h$ in $\g g$
 with respect to $K$.  
An element $X \in \g g$ is called {\it{regular}}
 if its centralizer $\g z_{\gs g}(X)$ in $\g g$  has minimal dimension, 
 {\it{i.e.}},  
$\dim\g z_{\gs g}(X)={\rm rank}\,\g g$.  
We denote by $\g g_{\rm reg}$ the set of regular elements $X$
of $\g g$, 
 and set
$$
\g h^\bot_{\rm reg}:=\g h^\bot\cap\g g_{\rm reg}.
$$
In the third paper \cite{BeKoIII}
 we found yet another but more geometric tempered criterion for $L^2(G/H)$
 when both ${\mathfrak{g}}$ and ${\mathfrak{h}}$ are assumed
 to be complex semisimple Lie algebras.  
As we see in Proposition \ref{proequcon}
 this geometric criterion can be reformulated
 as $\g h^\bot_{\rm reg}\neq\emptyset$. 
In the present paper, 
we extend this criterion
to all complex Lie subalgebras $\g h$ of $\g g$.

\bt
\label{thmtemorb}
Let $\g g$ be a complex semisimple Lie algebra and $\g h$ be a  complex Lie subalgebra.  
Then one has the equivalence~: 
\begin{equation}
\label{eqntemorb}
L^2(G/{{H}})\; \mbox{\rm is tempered}
\;\;\;\Longleftrightarrow \;\;\;
\g h^\bot_{\rm reg}\neq\emptyset.
\end{equation}
\et

Since the set $\g h^\bot_{\rm reg}$ is Zariski open in
$\g h^\bot$, one always has the equivalence
\begin{equation}
\label{eqnempden}
\g h^\bot_{\rm reg}\neq\emptyset
\;\;\;\Longleftrightarrow \;\;\;
\g h^\bot_{\rm reg}\;\;
\mbox{is dense in $\g h^\bot$}, 
\end{equation}
and thus Theorem \ref{thmtemorb}
 fits well into the aforementioned orbit philosophy.

One sees from \cite[Cor.~5.6]{BeKoII}
 that Theorem \ref{thmtemorb} for complex Lie groups
 yields the sufficiency of the temperedness in the real setting as well:
\begin{Cor}
\label{cor:realOrb}
Let $G$ be a real semisimple algebraic Lie group 
 and $H$ an algebraic subgroup.  
If $\g h_{\rm{reg}}^{\perp}\ne \emptyset$, 
 then $L^2 (G/H)$ is tempered.  
\end{Cor}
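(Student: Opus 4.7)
The plan is to deduce the corollary from the complex case (Theorem \ref{thmtemorb}) by complexifying, then descending via \cite[Cor.~5.6]{BeKoII}. Set $\g g_{\m C}:=\g g\otimes_{\m R}\m C$ and $\g h_{\m C}:=\g h\otimes_{\m R}\m C$, and take complex algebraic groups $G_{\m C}\supset G$ and $H_{\m C}\supset H$ with these Lie algebras; the algebraicity of $H$ is used here to produce a well-defined algebraic complexification $H_{\m C}$ inside $G_{\m C}$.

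The first step is to carry the hypothesis up to the complexification: if $\g h^\bot_{\rm reg}\neq\emptyset$ in $\g g$, then $(\g h_{\m C})^\bot_{\rm reg}\neq\emptyset$ in $\g g_{\m C}$. Indeed, the Killing form of $\g g_{\m C}$ is the $\m C$-bilinear extension of that of $\g g$, so $(\g h_{\m C})^\bot=\g h^\bot\otimes_{\m R}\m C$; and for $X\in\g g$, the identity $\g z_{\gs g_{\m C}}(X)=\g z_{\gs g}(X)\otimes_{\m R}\m C$ combined with ${\rm rank}\,\g g_{\m C}={\rm rank}\,\g g$ shows that $X$ is regular in $\g g$ if and only if it is regular in $\g g_{\m C}$. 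Hence any $X\in\g h^\bot_{\rm reg}$ witnesses $(\g h_{\m C})^\bot_{\rm reg}\neq\emptyset$.

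The second step invokes Theorem \ref{thmtemorb} applied to the complex pair $(\g g_{\m C},\g h_{\m C})$ to conclude that $L^2(G_{\m C}/H_{\m C})$ is tempered. The third step invokes \cite[Cor.~5.6]{BeKoII}, a descent result that transfers temperedness from a complexified homogeneous space $G_{\m C}/H_{\m C}$ to the real one $G/H$, yielding the desired conclusion.

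The main point needing care is the compatibility of all the complexifications (of $G$, of $H$, of the Killing form, and of the centralizers), which the algebraicity hypothesis on $H$ is designed to ensure. Once this setup is fixed, the descent is transparent via \eqref{eqntemrho}: for $Y\in\g h$ the eigenvalues of ${\rm ad}(Y)$ on $\g h$ and on $\g h_{\m C}$ agree, so the inequality $\rho_{\gs h_{\m C}}\leq\rho_{\gs g_{\m C}/\gs h_{\m C}}$ valid on all of $\g h_{\m C}$ restricts to $\rho_{\gs h}\leq\rho_{\gs g/\gs h}$ on $\g h$, which by \eqref{eqntemrho} is equivalent to temperedness of $L^2(G/H)$.
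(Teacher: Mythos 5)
Your proof is correct and follows the paper's intended route: complexify, apply Theorem \ref{thmtemorb} (equivalently Proposition \ref{prorhoorb}) to the pair $(\g g_{\m C},\g h_{\m C})$, and descend to $G/H$ via \cite[Cor.~5.6]{BeKoII}. The paper states the corollary without proof as ``immediately implied,'' and your write-up supplies exactly the two compatibility checks that make the implication immediate --- namely that a regular $X\in\g h^\bot$ remains regular in $\g g_{\m C}$ with $X\in(\g h_{\m C})^\bot$, and that the $\rho$-inequality on $\g h_{\m C}$ restricts to the $\rho$-inequality on $\g h$ since the relevant eigenvalue data is unchanged by complexification.
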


\begin{Rem}
(1)\enspace
The implications $\Longrightarrow$ in 
\eqref{eqntemorb} and \eqref{eqnrhoorb}
are not always true for a real semisimple Lie group $G$.
For instance, when $G$ is not $\m R$-split 
and $H$ is a maximal compact subgroup,
the representation $L^2(G/H)$ is tempered
but  $\g h^\bot_{\rm reg}$ is empty.
Another example is given by $G/H=SL(3, \m H)/SL(2,\m H)$.
\par\noindent
(2)\enspace
Let ${\mathfrak{g}}_{\operatorname{ame}}$
 denote the set of elements in ${\mathfrak{g}}$
 with amenable stabilizer 
 for the adjoint action of $G$.  
For reductive $H$, 
by \cite[Thm.~1.5]{BeKoIII} and Lemma \ref{lemcoaorb} below, 
 one has the implication:
\begin{equation}
\label{eqn:tempame}
\text{$L^2(G/H)$ is tempered
$\Longrightarrow$ $\g h^{\perp} \cap \g g_{\operatorname{ame}}$ is dense in $\g h^{\perp}$. }
\end{equation}
The converse implication \eqref{eqn:tempame} does not always hold
 even for semisimple symmetric spaces
 (\cite[Sect.~8.5]{BeKoIII}). 
\end{Rem}

By \eqref{eqntemrho}, 
 our main task for Theorem \ref{thmtemorb} will be to prove the following.  

\bp
\label{prorhoorb}
Let $\g g$ be a complex semisimple Lie algebra and $\g h$ a complex Lie subalgebra.  
Then one has the equivalence~: 
\begin{equation}
\label{eqnrhoorb}
2 \rho_{\gs h}\leq \rho_{\gs g}
\;\;\;\Longleftrightarrow \;\;\;
\g h^\bot_{\rm reg}\neq\emptyset.
\end{equation}
\ep

\subsection{The equivalent conditions}
\label{secequcon}

\bq
We now introduce two  other conditions 
that we will prove to be equivalent to \eqref{eqnrhoorb}. 
\eq

Let us think of $\g h$ as a point in the variety $\mc L$ of all Lie subalgebras of $\g g$.  
One surprising feature of the equivalence  \eqref{eqnrhoorb} is
that the left-hand side is a closed condition on $\g h$
while the right-hand side is an open condition on $\g h$.
Since both conditions are invariant by conjugation by $G$,
this remark suggests us to work 
 with the adjoint orbit closure of $\g h$. 
As we will see, 
 this new point of view will be very fruitful,
first by suggesting new striking conditions equivalent to \eqref{eqnrhoorb}
and  eventually by leading to a proof of \eqref{eqnrhoorb}.

Let ${\rm Ad} G$ be the adjoint group, 
let ${\rm Ad} G\,\g h$ be the ${\rm Ad}G$-orbit of $\g h$ in $\mc L$
and $\ol{{\rm Ad} G\,\g h}$ be the closure of this orbit. 
We introduce also the following two $G$-invariant algebraic subvarieties of $\mc L$:
\begin{eqnarray*}
\label{eqnlll}
\mc L_{sol}&:=&
\{\g r\in \mc L\mid \g r \;\;\mbox{\rm is solvable}\},\\	
\mc L_{mun}&:=&
\{\g n\in \mc L\mid \g n  \;\;\mbox{\rm is maximal unipotent in $\g g$}\}.	
\end{eqnarray*}	
We recall that a Lie subalgebra is said to be unipotent if 
all its elements are nilpotent.

As we mentioned,
 we will prove the equivalence \eqref{eqnrhoorb} by showing simultaneously 
the equivalence to other striking conditions that we introduce now.  
Let $H$ be the closure of a connected subgroup of $G$
 with Lie subalgebra $\g h$.  
\begin{eqnarray*}
\label{eqnequcon}
Tem(\g g,\g h)& :&
L^2(G/{{H}})\; \mbox{ is tempered},\\
Rho(\g g,\g h)&:&
\rho_{\gs h}\leq \rho_{\gs g/\gs h},\\
Sla(\g g,\g h)&:&  \ol{{\rm Ad} G\,\g h}\cap \mc L_{sol}\neq\emptyset,\\
Tmu(\g g,\g h)&:& \mbox{there exists\; $ \g n\in \mc L_{mun}$
 such that $\g h\cap \g n=\{ 0\}$},\\
Orb(\g g,\g h)&:&
\g h^\bot_{\rm reg}\neq\emptyset.
\end{eqnarray*}
To refer to these conditions, we might say informally that\\ 
- $\g h$ is a tempered Lie subalgebra,\\
- $\g h$ satisfies the $\rho$-inequality,\\
- $\g h$ admits a solvable limit algebra,\\
- $\g h$ has a transversal maximal unipotent,\\
- $\g h^\bot$ meets a regular orbit. 

\bt
\label{thmequcon}
Let $\g g$ be a complex semisimple Lie algebra and $\g h$ a complex Lie subalgebra.  
Then the following five conditions are equivalent~: 
\begin{equation*}
\label{eqntemrhoslatmuorb1}
Tem(\g g,\g h)
\Longleftrightarrow Rho(\g g,\g h)
\Longleftrightarrow Sla(\g g,\g h)
\Longleftrightarrow Tmu(\g g,\g h)
\Longleftrightarrow Orb(\g g,\g h).
\end{equation*}
\et

The proof of Theorem \ref{thmequcon} will last up to Section \ref{secindrho}. 
\bc
\label{corequcon2}
Let $\g g$ be a complex semisimple Lie algebra. The set $\mc L_{sla}$ of Lie subalgebras $\g h\subset \g g$ satisfying $Sla(\g g,\g h)$ is both closed and open in $\mc L$. 
\ec

\begin{proof}[Proof of Corollary \ref{corequcon2}]  
The condition $Rho(\g g,\g h)$ is closed, 
 while the condition $Orb(\g g,\g h)$
is open.
\end{proof}

\bc
\label{corequcon3}
Let $\g g$ be a complex semisimple Lie algebra and $\g h$ a complex Lie subalgebra. 
Choose $\g h'\in\ol{{\rm Ad} G\,\g h}$. Then one has the equivalence
\begin{equation}
\label{eqn:slaeq}
Sla(\g g,\g h)
\Longleftrightarrow
Sla(\g g,\g h').  
\end{equation}
\ec

\begin{proof}[Proof of Corollary \ref{corequcon3}]  
This is a consequence of Corollary \ref{corequcon2}
\end{proof}

The equivalence \eqref{eqn:slaeq} can be reformulated as follows:
\begin{eqnarray} 
\label{eqnaghclo}
\begin{minipage}[c]{12cm}
{\sl{
If the orbit closure $\ol{{\rm Ad} G\,\g h}$ contains at least one solvable $\g h''$,\\
then any $\g h'$ in $\ol{{\rm Ad} G\,\g h}$ is solvable
 as far as ${\rm Ad} G\,\g h'$ is closed.
}}
\end{minipage}& &
\end{eqnarray}	

Although the statement \eqref{eqn:slaeq} is 
 purely a structure theorem of Lie subalgebras, 
 our proof of \eqref{eqn:slaeq} relies on the theory 
 of unitary representations
 via Theorem \ref{thmequcon}.  
We would like to point out that we are not aware of a more direct proof
 of \eqref{eqn:slaeq}.

\begin{Rem}
We will explain in Theorem \ref{thmtemrhosla}, 
how to extend the equivalence 
$
Tem(\g g,\g h)
\Longleftrightarrow Rho(\g g,\g h)
\Longleftrightarrow Sla(\g g,\g h)
$
to complex algebraic {\it{non-semisimple}} Lie algebras $\g g$. 
In particular, we will see in Corollary \ref{corghghgh} 
that the equivalence \eqref{eqn:slaeq}
is true for any pair $\g g\supset\g h$ of complex Lie algebras.
\end{Rem}

\subsection{Strategy of proof and organization}
\label{secstrpro}

We now explain the strategy of the proof of Theorem \ref{thmequcon}.
Since we already know from \eqref{eqntemrho} the equivalence 
\begin{equation}
\label{eqntemrho0}
Tem(\g g,\g h)\Longleftrightarrow Rho(\g g,\g h)\, ,
\end{equation}
it remains to prove the equivalences 
\begin{equation}
\label{eqnrhoslatmuorb}
Rho(\g g,\g h)
\Longleftrightarrow Sla(\g g,\g h)
\Longleftrightarrow Tmu(\g g,\g h)
\Longleftrightarrow Orb(\g g,\g h).
\end{equation}
All these statements are purely algebraic and we will prove these implications
by algebraic methods in Chapter \ref{secslatmuorb} except for the implication
\begin{equation}
\label{eqnslarho}
 Sla(\g g,\g h)
\Longrightarrow Rho(\g g,\g h).
\end{equation}
The proof of this implication \eqref{eqnslarho} is more delicate
and will be given in Chapter \ref{seccomalg}.  
It will use an induction argument
that reduces to the case where $\g h$ is semisimple.
The induction argument will involve unitary representation theory
 and a parabolic subgroup 
$G_0$ of $G$ containing $H$. This will force us to 
deal with algebraic groups $G$ which are not semisimple.
 
The proof will also use the analytic interpretation
of $Rho(\g g,\g h)$ as a temperedness criterion, 
and the disintegration of the unitary representation $L^2(G_0/H)$.
Indeed we will  spend Chapters \ref{secreaalg} and \ref{secdomuni} 
proving the extension of the  temperedness criterion \eqref{eqntemrho}
that we need. 
This extension (Theorem \ref{thm:1.1}) is valid 
for any real algebraic Lie group $G$ and any real algebraic subgroup $H$.
The proof of this extension will rely on the Hertz majoration principle 
for unitary representations.
\vs

{\bf Acknowledgments.}
The authors are grateful to the IHES and to The University of Tokyo for their support.  
The second author was partially supported 
 by JSPS Kakenhi Grant Number JP18H03669.

\section{Sla, Tmu and Orb}
\label{secslatmuorb}
In this chapter, we focus on the proof of the implications 
in \eqref{eqnrhoslatmuorb} that uses only
algebraic tools. That is all of them except for the implication
\eqref{eqnslarho}.


\subsection{Sla and Tmu}
\label{secslatmu}
\bq
We begin with the easiest of all these equivalences.
\eq
\bp
\label{proslatmu}
Let $\g g$ be a complex semisimple Lie algebra and $\g h\subset \g g$
be a complex Lie subalgebra. Then, one has the equivalence
\begin{eqnarray}
Sla(\g g,\g h)
&\Longleftrightarrow&
Tmu(\g g, \g h)\, .
\end{eqnarray}
\ep

\begin{proof}[Proof of Proposition \ref{proslatmu}]
$\Longrightarrow$  Since we assume $Sla(\g g,\g h)$, 
there exists a se\-quence $(g_n)_{n\geq 1}$ in $G$ such that the limit
$\g r=\lim\limits_{n\ra\infty}{\rm Ad} g_n\,\g h$ 
exists and is a solvable Lie sub\-alge\-bra of 
$\g g$. Since $\g r$ is solvable, 
there exists a Borel subalge\-bra $\g b^-$ of $\g g$ containing $\g r$. Let $\g n$ 
be a maximal unipotent subalgebra of $\g g$ which is opposite to $\g b^-$, so that one has $\g b^-\oplus \g n=\g g$. 
In particular, one has $\g r\cap \g n=\{0\}$ and, for $n$ large,
${\rm Ad}g_n\,\g h\cap \g n=\{0\}$.
This proves $Tmu(\g g,\g h)$.

$\Longleftarrow$ Since we assume $Tmu(\g g,\g h)$, there exists a
maximal unipotent sub\-alge\-bra $\g n$ of $\g g$ 
such that $\g h\cap \g n=\{0\}$. Let $\g b$ be the Borel subalgebra containing $\g n$, let $\g j$ be a Cartan subalgebra
of $\g b$ so that $\g b=\g j\oplus \g n$ and let 
$\g n^-$ be the maximal unipotent subalgebra of $\g g$
which is opposite to $\g b$ and normalized by $\g j$. 
Let $\De=\De(\g g, \g j)$ be the root system of $\g j$
in $\g g$. We write $\De=\De^+ \cup \De^-$ where $\De^+$ 
and $\De^-$ are respectively the roots of $\g j$ in 
$\g n$ and $\g n^-$.
Choose an element $X\in \g j$ in the positive Weyl chamber,
this means that for all $\al\in \De^+$, one has 
${\bf Re}(\al(X))>0$.
Since $\g h\cap \g n=\{0\}$, 
the limit $\g r:=\lim\limits_{n\ra\infty}{\rm Ad} e^{-nX}\,\g h$
exists and is a subalgebra of $\g b^-$. In particular, 
this Lie algebra $\g r$ is solvable.
This proves $Sla(\g g,\g h)$.
\end{proof}

\bc
\label{corslaope}
Let $\g g$ be a complex semisimple Lie algebra. Then, 
the set of subalgebras $\g h$ satisfying 
$Sla(\g g,\g h)$ is open in $\mc L$.
\ec

\begin{proof}
The condition $Tmu(\g g,\g h)$ is clearly an open condition.  
\end{proof}

\subsection{Related Lie subalgebras}
\label{sechhh}
\bq
We now explain why we can often assume that $\g h=[\g h,\g h]$.
\eq

\bl
\label{lemhhh}
Let $\g g$ be a complex semisimple Lie algebra and $\g h\subset \g g$
be a complex Lie subalgebra. Let $G$ be a Lie group with Lie algebra
$\g g$ and $H_1=H$ be the smallest closed subgroup of $G$
whose Lie algebra contains $\g h$.
Set $\g h_0=[\g h,\g h]$ and $\g h_1:=Lie(H)$.
Then, one has the equivalences
\begin{eqnarray}
(i)\;\;\;Sla(\g g,\g h)
&\Longleftrightarrow&
Sla(\g g, \g h_0)\, .\\
(ii)\;\; Sla(\g g,\g h)
&\Longleftrightarrow&
Sla(\g g, \g h_1)\, .
\end{eqnarray}
\el

\begin{proof}[Proof of Lemma \ref{lemhhh}]
$(i)\Longrightarrow$ This follows from the inclusion 
$\g h_0\subset \g h$. 

$(i)\Longleftarrow$ Since we assume
$Sla(\g g,\g h_0)$, there exists a sequence $(g_n)_{n\geq 1}$ in $G$ such that the limit
$\g r_0=\lim\limits_{n\ra\infty}{\rm Ad} g_n\,\g h_0$ 
exists and is a solvable Lie subalgebra of 
$\g g$. Then, after extraction, 
the limit
$\g r:=\lim\limits_{n\ra\infty}{\rm Ad} g_n\,\g h$ 
exists and satisfies 
$[\g r,\g r]\subset \lim \limits_{n\ra\infty}
[{\rm Ad} g_n\,\g h,{\rm Ad} g_n\,\g h]= \g r_0$.
In particular, the limit $\g r$ is a solvable Lie subalgebra of 
$\g g$.
This proves $Sla(\g g,\g h)$.

$(ii)$ This follows from $(i)$ and the inclusions 
$[\g h_1,\g h_1]\subset \g h\subset \g h_1$.
\end{proof}

\subsection{Sla and Orb}
\label{secslaorb}
\bq
The proof of the following equivalence is still purely algebraic but slightly more tricky.
\eq
\bp
\label{proslaorb}
Let $\g g$ be a complex semisimple Lie algebra and $\g h\subset \g g$
be a complex Lie subalgebra. Then, one has the equivalence
\begin{eqnarray}
Sla(\g g,\g h)
&\Longleftrightarrow&
Orb(\g g, \g h)\, .
\end{eqnarray}
\ep

\begin{proof}[Proof of the implication $\Longrightarrow$ in Proposition \ref{proslaorb}]
Since we assume $Sla(\g g,\g h)$, there exists a se\-quence $(g_n)_{n\geq 1}$ in $G$ such that the limit
$\g r=\lim\limits_{n\ra\infty}{\rm Ad} g_n\,\g h$ 
exists and is a solvable Lie subalgebra of 
$\g g$. Since $\g r$ is solvable, there exists a Borel subalge\-bra $\g b$ of $\g g$ containing $\g r$.	Since the orthogonal of $\g b$ 
is the maximal unipotent subalgebra $\g b^\bot=\g n:=[\g b,\g b]$,
the orthogonal $\g r^\bot$ also contains $\g n$.
By a result of Dynkin (see \cite[Thm.~4.1.6]{CoMcG}), the Lie algebra
$\g n$ always contains regular elements of $\g g$, 
the orthogonal $\g r^\bot$ also contains regular elements of $\g g$.
Since the set $\g g_{reg}$ is open, for $n$ large, the orthogonal
${\rm Ad}g_n\,\g h^\bot$ contains regular elements and 
$\g h^\bot$ too. This proves $Orb(\g g, \g h)$.
\end{proof}

The proof of the converse implication will rely on the following two lemmas.

\bl
\label{lemxxlpxu}
Let $\g g$ be a complex semisimple Lie algebra and $\g q=\g l\oplus\g u$
be a parabolic subalgebra where $\g l$ is a reductive Lie subalgebra and $\g u$ is the unipotent
radical of $\g q$. 

Let $X=X_{\gs l}+X_{\gs u}$ be an element of $\g q$ with $X_{\gs l}\in \g l$ 
and $X_{\gs u}\in \g u$. If $X$ is regular in $\g g$, then $X_{\gs l}$ is regular in $\g l$.
\el

Let $r$ be the rank of $\g g$. We recall that the set $\g g_{reg}$ of regular elements
of $\g g$ is the set of elements $X\in \g g$ whose centralizer in $\g g$ has dimension 
$\dim \g z_{\gs g}(X) = r$.
Similarly, the set $\g l_{reg}$ of regular element
of $\g l$ is the set of elements $X\in \g l$ 
whose centralizer in $\g l$ has dimension 
$\dim \g z_{\gs l}(X) = r$.
This set may not be equal to $\g l\cap \g g_{reg}$. 
For instance, when $\g q$ is a Borel subalgebra, 
then $\g l$ is a Cartan subalgebra of $\g g$ and one has 
$\g l_{reg}=\g l$.

\begin{proof}[Proof of Lemma \ref{lemxxlpxu}]
One computes
\begin{eqnarray*}
\dim \g g -r	
&=&
\dim {\rm Ad}G\, X\\
&\leq&
\dim G/Q + \dim {\rm Ad}Q\, X\\
&\leq&
2\dim \g u + \dim ({\rm Ad}Q\, X +\g u)/\g u\\
&=&
2\dim \g u + \dim {\rm Ad}L\, X_{\gs l}.
\end{eqnarray*}	
This proves 
$
\dim {\rm Ad}L\, X_{\gs l}\geq \dim\g l - r
$	
and hence  $X_{\gs l}$ is regular in $\g l$.
\end{proof}

\bl
\label{lemxohxoh}
Let $\g g$ be a complex semisimple Lie algebra,
$\g h$ a complex  Lie subalgebra,
 and $X\in \g h^\bot$.
Then there exists $\g h'\in \ol{{\rm Ad}G\,\g h}$ such that 
$X\in {\g h'}^\bot$ and $[X,\g h']\subset \g h'$.
\el

We recall that $G$ is a connected	complex Lie group with Lie algebra $\g g$.
Such a Lie group has a unique structure of complex algebraic Lie group.

\begin{proof}[Proof of Lemma \ref{lemxohxoh}]
Let $A\subset G$ be the Zariski closure of the one-parameter subgroup
$\{e^{tX}\mid t\in \m C\}$. This group $A$ is abelian.

Note that, for all $a$ in $A$, the Lie subalgebra ${\rm Ad}a\,\g h$ is orthogonal to $X$.
Therefore, all Lie subalgebra $\g h'$ in the orbit closure $\ol{{\rm Ad}A\,\g h}$
are ortho\-gonal to $X$.
This orbit closure 	$\ol{{\rm Ad}A\,\g h}$ is a $A$-invariant subvariety 
of the pro\-jec\-tive algebraic variety $\mc L$. 
By Borel fixed point theorem
\cite[Theorem 10.6]{Bo69}, the solvable group $A$ has a fixed point in this 
subvariety. This means that there exists $\g h'$ in $\ol{{\rm Ad}A\,\g h}$
such that ${\rm Ad}A\,\g h'=\g h'$. In particular,  $[X,\g h']\subset \g h'$.
\end{proof}

\begin{proof}[Proof of the implication $\Longleftarrow$ in Proposition \ref{proslaorb}]
We argue by induction on the dimension of $\g g$. 
We assume that $\g h^\bot$ contains a regular element $X$,
and we want to prove $Sla(\g g,\g h)$. 
By Corollary \ref{corslaope}  and Lemma \ref{lemxohxoh}, we can also assume 
that $X$ normalizes $\g h$, i.e. that $[X,\g h]\subset \g h$.
In particular, the sum  $\wt{\g h}:=\m C X\oplus \g h$ is a Lie subalgebra of $\g g$.
By Lemma \ref{lemhhh} (i), we may and do assume that $\g h=[\g h,\g h]$.
Let $\g q$ be a parabolic subalgebra of $\g g$ of minimal dimension containing $\wt{\g h}$,
and $\g u$ the unipotent
radical of $\g q$.  
By minimality of $\g q$, the image of $\wt{\g h}$ in $\g q/\g u$
is reductive. Therefore 
we can write 
$\g h=\g s\oplus\g v$
where $\g s$ is a semisimple Lie subalgebra and 
$\g v:=\g h\cap \g u$ is the unipotent
radical of $\g h$. 
We can then write $\g q=\g l\oplus\g u$
where $\g l$ is a reductive Lie subalgebra containing $\g s$.
We sum up this discussion by the inclusions:
$$
\g h= \g s\oplus \g v\;\subset\; 
\g q=\g l\oplus\g u \;\subset\; \g g\, .
$$

Since $X$ is in  
$\widetilde{\mathfrak h} \subset \mathfrak q$, 
we can decompose $X$ as  $X=X_{\gs l}+X_{\gs u}$  with $X_{\gs l}\in \g l$ 
and $X_{\gs u}\in \g u$. By Lemma \ref{lemxxlpxu}, 
the element $X_\gs l$ is regular in $\g l$. 
Since $\g u$ is the orthogonal of $\g q$ with respect to the Killing form $K$, 
one has 
$$
K(X_{\gs l},\g s)=K(X_{\gs l}+X_{\gs u},\g s\oplus\g v)=K(X,\g h)=0.
$$
This proves that $X_{\gs l}$ is orthogonal to $\g s$.

We now claim that $\g q\neq \g g$. 
Indeed, if $\g q=\g g$, one has the equalities 
$\wt{\g h}=\g h=\g s$, and this Lie algebra is semisimple
 by the assumption that $\g h=[\g h, \g h]$.
Therefore the Killing form restricted to $\g h$ is nondegenerate.
This contradicts the assumption $X\in \g h^\bot$. 

Therefore one has $\g q\neq \g g$.
The normalizer $L:= N_G(\g l)$ of $\g l$ in $G$
has Lie algebra $\g l$. 
We have seen that the intersection $\g s^\bot\cap \g l_{reg}$ is non-empty. Therefore,
by induction hypothesis, the orbit closure $\ol{{\rm Ad}L\,\g s}$
contains a solvable Lie algebra, and
the orbit closure $\ol{{\rm Ad}L\,\g h}$ also
contains a solvable Lie algebra. This proves $Sla(\g g, \g h)$.
\end{proof}

\subsection{Rho and Sla}
\label{secrhosla}
\bq
In this section we will prove the following implication 
which is still purely algebraic. 
The proof of the converse will be much more delicate.
\eq
We will in fact prove a stronger statement

\bp
\label{prorhosla}
Let $\g g$ be a complex semisimple Lie algebra and $\g h\subset \g g$
be a complex Lie subalgebra. Then, one has the implication
\begin{eqnarray}
Rho(\g g,\g h)
&\Longrightarrow&
Sla(\g g, \g h)\, .
\end{eqnarray}
More precisely, if $\g h$ satisfies $Rho(\g g,\g h)$, then 
every Lie algebra $\g h'$ in $\ol{{\rm Ad}G\g h}$ 
satisfies $Sla(\g g, \g h)$.
\ep

It will be useful to introduce the following two 
$G$-invariant subsets of $\mc L$.
\begin{eqnarray}
\mc L_{rho}&:=&
\{\g h\in \mc L\mid 
\rho_{\gs h}\leq \rho_{\gs g/\gs h}\},\\
\mc L_{clo}&:=&
\{\g h\in \mc L\mid {\rm Ad} G\,\g h \;\;\mbox{\rm is closed in}\; \mc L\}.	
\end{eqnarray}

\begin{Rem}
We have the following nice characterisation 
of closed orbits in $\mc L$.
\begin{eqnarray}
\mbox{ }\hspace{-3em}\g h\in \mc L_{clo}&\Longleftrightarrow&
\mbox{the normalizer $N_\gs g(\g h)$ 
is a parabolic subalgebra of $\g g$}\\
&\Longleftrightarrow&	
\mbox{$\g h$ is normalized by a Borel 
subalgebra of $\g g$}
\end{eqnarray}
\end{Rem}

\begin{proof}[Proof of Proposition \ref{prorhosla}]
This follows from Lemma \ref{lemrhosla} below
and from the fact that the orbit closure always contains a closed $G$-orbit.
\end{proof}

\bl
\label{lemrhosla}
Let $\g g$ be a complex semisimple Lie algebra. Then, \\
$(i)$ $\mc L_{rho}$ is closed in $\mc L$.\\
$(ii)$ Let $\g h\subset \g g$
be a complex Lie subalgebra with ${\rm Ad}G\,\g h$  closed.
Then, 
\begin{eqnarray*}
\mbox{$\g h$ is solvable} 
&\Longleftrightarrow&
Rho(\g g, \g h)\, .
\end{eqnarray*}
\el

\begin{proof}[Proof of Lemma \ref{lemrhosla}]
$(i)$ The map $(\g h,Y) \mapsto \rho_ {\gs h}(Y)$ is continuous
on the set $\{ (\g h,Y) \mid\g h\in \mc L\; ,\; Y\in \g h\}$.
Let $\g h_n\in \mc L_{rho}$ be a sequence 
that converges to a Lie algebra $\g h_\infty$. We want to prove that $\g h_\infty\in \mc L_{rho}$. Let $Y_\infty\in \g h_\infty$.
We can find a sequence $Y_n\in \g h_n$ converging to $Y_\infty$.
Therefore, one has 
$$
\rho_\gs g(Y_\infty)-2\,\rho_{\gs h_\infty}(Y_\infty)
=\lim\limits_{n\ra\infty}
\rho_\gs g(Y_n)-2\,\rho_{\gs h_n}(Y_n)
\geq 0\, .
$$
This proves that $\g h_\infty$ is in $\mc L_{rho}$. 

$(ii) \Longrightarrow $  
Since $\g h$ is solvable, it is included in a Borel Lie subalgebra $\g b$. Note that
$\g b$ satisfies the $\rho$-inequality, more precisely, 
one has the equality 
$\rho_\gs b(Y)=\rho_{\gs g/\gs b}(Y)$, for all $Y$ in $\g b$.
Therefore, $\g h$ also satisfies $Rho(\g g,\g h)$.

$(ii) \Longleftarrow $  
Let $\g h$ be a Lie  subalgebra with ${\rm Ad}G\,\g h$ closed
and which satisfies $Rho(\g g,\g h)$. 
We want to prove that $\g h$ is solvable. 
Replacing a few times $\g h$ 
by its derived subalgebra
$[\g h,\g h]$ if necessary, 
we may assume that $\g h=[\g h,\g h]$.
Let $\g q$ be the normalizer of $\g h$
and $\g u$ be the unipotent radical of $\g q$.
By assumption $\g q$ is a parabolic Lie subalgebra.
The projection of $\g h$ in the reductive Lie algebra 
$\g q/\g u$ is an ideal and hence is a semisimple Lie algebra.
Therefore we can write $\g h=\g s\oplus \g v$,
where $\g s$ is a semisimple Lie subalgebra 
and $\g v:=\g h\cap \g u$ is the unipotent radical of $\g h$.
We  then write $\g q=\g l\oplus \g u$ where $\g l$
is a reductive Lie subalgebra containing $\g s$.
Let $\g u^-$ be the  opposite unipotent subalgebra
which is opposite to $\g q$ and normalized by $\g l$
so that $\g g=\g u^-\oplus\g l\oplus \g u$.
Fix $Y$ in $\g s$.
Since $\g q$ normalizes $\g h$ one has
\begin{eqnarray}
\label{eqnrhohlu}
\rho_\gs h(Y)&=& \rho_\gs l(Y)+\rho_\gs u(Y)\, .
\end{eqnarray}
Since $\g u^-$ is dual to $\g u$ as an $\g l$-module, one has
\begin{eqnarray}
\label{eqnrhoglu}
\rho_\gs g(Y)&=& \rho_\gs l(Y)+2\,\rho_\gs u(Y)\, .
\end{eqnarray}
Combining \eqref{eqnrhohlu} and \eqref{eqnrhoglu}, and using
the $\rho$-inequality, one gets
\begin{eqnarray*}
\label{eqnrhoglu2}
\rho_\gs s(Y)\;\leq\;\rho_\gs l(Y)&=& 
2\,\rho_\gs h(Y)-\rho_\gs g(Y)\;\leq\; 0\, .
\end{eqnarray*}
Since this is true for all $Y$ in the semisimple Lie algebra $\g s$,
one must have $\g s=0$. This proves that $\g h$ is solvable.
\end{proof}
 
\subsection{Reductive homogeneous spaces}
\label{secredhom}
\bq
In this section we check Theorem \ref{thmequcon}
for $\g h$ reductive by relying on the previous papers of this series.
We  will prove:
\eq

\bp
\label{proequcon}
Let $\g g$ be a complex semisimple Lie algebra
 and $\g h\subset \g g$ a complex reductive Lie subalgebra.
The following conditions are equivalent~: 
\begin{equation*}
\label{eqntemrhoslatmuorb2}
Tem(\g g,\g h)
\Longleftrightarrow Rho(\g g,\g h)
\Longleftrightarrow Sla(\g g,\g h)
\Longleftrightarrow Tmu(\g g,\g h)
\Longleftrightarrow Orb(\g g,\g h).
\end{equation*}
\ep

\begin{Rem} Since $\g g$ is semisimple and $\g h$ is reductive,
one has a decomposition $\g g=\g h\oplus\g h^\bot$
with respect to the Killing form, and  the orthogonal complement $\g h^\bot$ is isomorphic to the quotient $\g g/\g h$ as an $\g h$-module.
\end{Rem}

The proof uses  the condition $Ags(\g g,\g h)$ that we introduced in
\cite{BeKoIII} and proven to be equivalent
to $Rho(\g g,\g h)$. It is defined by:
\begin{eqnarray*}
\label{eqnags}
Ags(\g g,\g h)& :&
\mbox{\rm the set $\{X\in \g h^\bot\mid \g z_{\gs h}(X)$ 
is abelian $\}$  is dense in $\g h^\bot$. }\;
\end{eqnarray*}

\begin{proof}[Proof of Proposition \ref{proequcon}]\mbox{ }\\
$\star$ The equivalence  
$Tem(\g g,\g h)\Longleftrightarrow Rho(\g g,\g h)$
is proven in \cite[Thm.~4.1]{BeKoI} for all real semisimple Lie algebra $\g g$ 
and all real reductive Lie subalgebra $\g h$.\\ 
$\star$ The equivalence  
$Sla(\g g,\g h)
\Longleftrightarrow Tmu(\g g,\g h)
\Longleftrightarrow Orb(\g g,\g h)
$
has been proven in the previous sections for all complex Lie subalgebra $\g h$.\\ 
$\star$ The equivalence  
$Rho(\g g,\g h)\Longleftrightarrow Ags(\g g,\g h)$
is proven in \cite[Thm.~1.6]{BeKoIII} for all complex semisimple Lie algebra $\g g$ 
and all complex reductive Lie subalgebra $\g h$.\\ 
$\star$ The  equivalence  
$Ags(\g g,\g h)\Longleftrightarrow Orb(\g g,\g h)$
is proven in  Proposition \ref{proagsorb} below.
\end{proof}

\bp
\label{proagsorb}
Let $\g g$ be a complex semisimple 
Lie algebra and $\g h\subset \g g$
be a complex reductive Lie subalgebra. Then, one has the equivalence
\begin{eqnarray}
Ags(\g g,\g h)
&\Longleftrightarrow&
Orb(\g g, \g h)\, .
\end{eqnarray}
\ep

We will need the following lemma which relates centralizer in $\g g$ and centralizer in $\g h$.  

\bl
\label{lemminorb}
Let $\g g$ be a real semisimple Lie algebra, 
 $\g h$ a real reductive Lie subalgebra, 
 and regard $\g h^{\bot} \subset \g g$
 via the Killing form as before. 
Let 
$$
\g h^\bot_{\rm min}:=\{ X\in \g h^\bot\mid
\dim \g z_{\gs g}(X)=r_{\gs g,\gs h}\}
\;\;{\rm where}\;\;
r_{\gs g,\gs h}:=
\min\limits_{X\in\gs h^\bot}\dim \g z_{\gs g}(X)
$$
Then, for every $X_0$ in $\g h^\bot_{\rm min}$, 
one has 
$
[\g z_{\gs g}(X_0),\g z_{\gs g}(X_0)]\subset 
\g z_{\gs h}(X_0)\, .
$
\el

Note that Lemma \ref{lemminorb} applied to $\g h=\{0\}$ implies
 that $\g z_{\g g}(X_0)$ is abelian
 if $X_0 \in \g g_{\rm reg}$. Indeed, 
 when $\g h=\{0\}$, one has 
 $r_{\g g, \g h}=\operatorname{rank} \g g$
 and $\g h_{\rm min}^{\bot}=\g g_{\rm reg}$.

This lemma is a special case of the following  general lemma for coadjoint orbits of real Lie algebras which  is well-known when $\g h=\{0\}$.

\bl
\label{lemcoaorb}
Let $\g g$ be a real Lie algebra and $\g h\subset \g g$
be a real Lie subalgebra. 
Let $\g g^*$be the dual of $\g g$ and 
$\g h^\bot:=\{ f\in \g g^*\mid f(\g h)=\{0\}\}$.
We set 
$$
\g h^\bot_{\rm min}:=\{ f\in \g h^\bot\mid
\dim \g g_f=r_{\gs g,\gs h}\}
\;\;{\rm where}\;\;
r_{\gs g,\gs h}:=
\min\limits_{f\in\gs h^\bot}\dim \g g_f.  
$$
Then, for every $f_0$ in $\g h^\bot_{\rm min}$, 
one has 
$
[\g g_{f_{_0}},\g g_{f_{_0}}]\subset \g h_{f_{_0}}\, .
$
\el
Here $\g g_f:=\{ Y\in \g g\mid Yf=0\}$ denotes the stabilizer of $f$ in $\g g$ and 
$\g h_f:=\g g_f\cap \g h$ its stabilizer in $\g h$.

\begin{proof}[Proof of Lemma \ref{lemcoaorb}]
Fix $f_0\in \g h^\bot_{\rm min}$ and  two elements $Y_0$ and $Z_0$ in $\g g_{f_{_0}}$. We want to prove that $[Y_0,Z_0]\in \g h$.
We write 
$$
\g g= \g g_{f_{_0}}\oplus \g m
$$
where $\g m$ is a complementary vector subspace.

For all $f\in \g h^\bot$, for $t\in \m R$ small enough
the element $f_t:=f_0+tf$  is also in the open set $\g h^\bot_{\rm min}$. 
Choose a linear projection  $\pi_0 \colon \g g^* \to \g g f_0$.
By the local inversion theorem, the map 
\begin{eqnarray*}
\Phi \colon (Y_0+\g m)\times \m R
&\to &
\g g f_0\times \m R\\
(Y,t)
&\mapsto&
(\pi_0(Yf_t),t)
\end{eqnarray*}
is a local diffeomorphism near $(Y_0,0)$.
Let $t\mapsto Y_t$ be the differentiable curve 
near $0$ starting from $Y_0$ given by $\Ph(Y_t,t)=(0,t)$.  
Since for $t$ small the linear map 
$\pi_0 \colon \g gf_t\to\g gf_0$ is an isomorphism, it satisfies 
$$
Y_t\in Y_0+\g m
\;\;\;{\rm and}\;\;\;
Y_tf_{_t}=0\, .
$$	
For the same reason, there exists a differentiable curve $t\mapsto Z_t$
near $0$ starting from $Z_0$ such that 
$$
Z_t\in Z_0+\g m
\;\;\;{\rm and}\;\;\;
Z_tf_{_t}=0\, .
$$	
They satisfy the equality $f_t([Y_t,Z_t])=0$
whose derivative at $t=0$ gives 
$$
f([Y_0,Z_0])+f_0([Y'_0,Z_0])+f_0([Y_0,Z'_0])=0
$$
Since both $Y_0$ and $Z_0$ stabilize $f_0$ the last two terms are zero.
One deduces 
$$
f([Y_0,Z_0])=0\;\;
\mbox{\rm for all $f$ in $\g h^\bot$.}
$$
This proves that $[Y_0,Z_0]$ is in $\g h$ as required.
\end{proof}

The following lemma will also be useful.

\bl
\label{lemsshden}
Let $\g g$ be a complex semisimple Lie algebra and $\g h\subset \g g$
be a complex reductive Lie subalgebra. 
Then the set 
$$
\g h^\bot_{\rm ss}:=\{ X\in \g h^\bot\mid
\;\mbox{\rm $X$ is semisimple}\}.
$$
is Zariski dense in $\g h^\bot$.
\el

\begin{proof}[Proof of Lemma \ref{lemsshden}]
There exists a compact real form $\g g_{\m R}$ of $\g g$
such that $\g h$ is defined over $\m R$.
Since $\g g_{\m R}= \g h_{\m R} \oplus \g h_{\m R}^\bot$,
the vector space $\g h_{\m R}^\bot$ 
is Zariski dense in $\g h^\bot$. 
Since all elements of $\g g_{\m R}$ are semisimple, this proves our claim.
\end{proof}

\begin{proof}[Proof of Proposition \ref{proagsorb}]
$\Longleftarrow$ Since
the Zariski open set $\g g_{\rm reg}$ meets 
the or\-tho\-gonal $\g h^\bot$ for the Killing form, 
the intersection $\g h^\bot_{\rm reg}$ is dense in $\g h^\bot$.
By Lemma \ref{lemminorb} applied with the zero subalgebra, 
every $X_0$ in $\g g_{\rm reg}$ has an abelian centralizer in $\g g$.
In particular, every $X_0$ in $\g g_{\rm reg}$ has an abelian centralizer in $\g h$.
This proves $Ags(\g g,\g h)$.

$\Longrightarrow$ Let 
$r':= \min\{\dim \g z_{\gs h}(X)\mid X\in \g h^\bot\}.$
The set 
$$
\g h^\bot_{\rm gen}:=\{ X\in \g h^\bot_{\rm min}\mid
\dim \g z_{\gs h}(X)=r'\}
$$
is nonempty and Zariski open  in $\g h^\bot$.
By assumption the set 
$$
\g h^\bot_{\rm abe}:=\{ X\in \g h^\bot_{\rm gen}\mid
\g z_{\gs h}(X)\;
\mbox{is abelian}\}
$$
is dense in $\g h^\bot_{\rm gen}$.
Since it is also closed in $\g h^\bot_{\rm gen}$,
one has $\g h^\bot_{\rm abe}=\g h^\bot_{\rm gen}$.
Therefore by Lemma \ref{lemsshden} the set
$\g h^\bot_{\rm abe}$
 contains a semisimple element $X_0$.
The centralizer $\g z_{\gs g}(X_0)$ is then a reductive Lie algebra.
By Lemma \ref{lemminorb}, 
the  Lie algebra
$
[\g z_{\gs g}(X_0),\g z_{\gs g}(X_0)]$
is included in 
$\g z_{\gs h}(X_0)$ which is an abelian Lie algebra.
Therefore the Lie algebra  $\g z_{\gs g}(X_0)$ itself is abelian.
Since $X_0$ is semisimple, this centralizer is a Cartan subalgebra 
and $X_0$ is regular in $\g g$.
This proves $Orb(\g g, \g h)$.
\end{proof}


\section{Real algebraic homogeneous spaces}
\label{secreaalg}

The proof of the last remaining implication 
\eqref{eqnslarho} will last up to the end of this paper.
Because of the induction method which involves parabolic subgroups,
we need to extend the temperedness criterion
of \cite{BeKoII} to non-semisimple algebraic groups $G$. 
This extension will be valid for all real algebraic group.

\subsection{Notations}
\label{secnotrea}

Let $G$ be a 
real algebraic Lie group,
$H$ be an
algebraic Lie subgroup.
We write $G=LU$ and $H=SV$ where $S\subset L$ are reductive subgroups
and where $V$ and $U$ are the unipotent radical of $H$ and $G$.
Note that, in general one does not have the inclusion $V\subset U$. 
We denote by $\g g$, $\g h$, $\g l$, $\g u$, etc... the corresponding Lie algebras.

We consider the following conditions:
\begin{eqnarray*}
	\label{eqndeftemrhosla}
	Tem(\g g,\g h)& :&
	L^2(G/{{H}})\; \mbox{ is $L$-tempered}.\\
	Rho(\g g,\g h)&:&
	\rho_{\gs l}\leq 2\,\rho_{\gs g/\gs h}
	\;\mbox{as functions on $\g s$}.\\
	Sla(\g g,\g h)&:&  \ol{{\rm Ad} G\,\g h}
	\;\mbox{contains a solvable Lie algebra}.
\end{eqnarray*}
\begin{Rem}
By $L$-tempered, we mean tempered as a representation of $L$,
or, equivalently, tempered as a representation of the semisimple Lie group
$[L,L]$.	When $G$ is not semisimple this notion happens to be much more
useful than the temperedness as a representation of $G$.
\end{Rem}

\begin{Thm} 
\label{thmgh}
Let $G$ be a 
real algebraic Lie group,
$H$ be an 
algebraic Lie subgroup.
One has the equivalence,\\
\centerline{$Tem(\g g,\g h) \Longleftrightarrow Rho(\g g, \g h).$} 
\end{Thm}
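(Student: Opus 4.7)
The plan is to extend the semisimple temperedness criterion \eqref{eqntemrho} of \cite{BeKoI,BeKoII} to arbitrary real algebraic pairs $(G,H)$ by reducing the analysis to the reductive Levi $L$ and using the Hertz majoration principle as the main analytic tool (as announced in the introduction). The starting point is to analyze how $L^2(G/H)$ decomposes under the restricted action of $L\subset G$. With $G=LU$ and $H=SV$, note that $V$ need not lie in $U$, so the image of $H$ in $G/U\simeq L$ is an algebraic subgroup $S'\subset L$ larger than $S$ in general. Disintegrating $L^2(G/H)$ along an appropriate $L$-equivariant fibration, one obtains a direct integral of induced representations $\Ind_{S_x}^L(\chi_x)$, where $S_x\subset L$ are $L$-conjugates of stabilizers and the characters $\chi_x$ encode the unipotent contributions of $U$ and $V$. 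In the reductive case $L=G$, $S=H$, this reduces to the setup used in \cite{BeKoII}.

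For the main direction $Rho(\g g,\g h)\Rightarrow Tem(\g g,\g h)$, I would apply the Hertz majoration principle to bound matrix coefficients of $L^2(G/H)|_L$ against those of the regular representation of $L$. Hertz's principle reduces $L$-temperedness to an integrability statement for an explicit exponential weight; a weight computation on a split Cartan $\g a\subset\g s$ shows that the relevant exponent is $2\rho_{\gs g/\gs h}-\rho_{\gs l}$. Thus the hypothesis $\rho_{\gs l}\leq 2\rho_{\gs g/\gs h}$ on $\g s$ is exactly what pushes this weight into the integrability regime characterizing temperedness (Cowling--Haagerup--Howe). The factor $2$ arises because temperedness concerns the $L^{2+\varepsilon}$ decay of matrix coefficients, so the squared norms double the weight; the restriction to $\g s$ reflects that only the semisimple part of $L$ is relevant for $L$-temperedness.

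For the converse $Tem(\g g,\g h)\Rightarrow Rho(\g g,\g h)$, I would exhibit explicit $K$-finite test vectors (for $K$ a maximal compact of $L$) supported near a chosen base point of $G/H$ and compute the leading asymptotic of their matrix coefficients along $\exp(tY)$ with $Y\in\g s$ in a positive Weyl chamber. Temperedness forces this decay rate to dominate $\rho_{\gs l}(Y)$, while the volume growth of $G/H$ in the $Y$-direction contributes $2\rho_{\gs g/\gs h}(Y)$, and comparing the two yields the inequality. In the reductive case one can check directly that this specializes to the criterion $\rho_{\gs h}\leq\rho_{\gs g/\gs h}$ of \cite{BeKoI}.

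The main obstacle will be executing the Hertz majoration cleanly in the non-semisimple setting where $V\not\subset U$: the cross-terms between the two unipotent radicals yield twisting characters whose weights must align precisely to produce the final exponent $2\rho_{\gs g/\gs h}-\rho_{\gs l}$ on $\g s$. Ensuring that no spurious weight contributions appear, and that the proof degenerates correctly to the known semisimple case when $U$ and $V$ are trivial, is the delicate algebraic bookkeeping at the heart of the argument—this is presumably why the authors devote the next two chapters to it.
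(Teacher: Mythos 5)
Your treatment of the direction $Tem(\g g,\g h)\Rightarrow Rho(\g g,\g h)$ matches the paper: one bounds $\langle\Pi(e^Y)1_C,1_C\rangle$ from below by a volume estimate giving $e^{-\rho_{\gs g/\gs h}(Y)}$ up to constants (the paper's Lemma \ref{lemrovvol}), then compares against the Cowling--Haagerup--Howe upper bound for the Harish-Chandra spherical function, and reads off the $\rho$-inequality; this is exactly Proposition \ref{proghw1}.

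For the converse $Rho(\g g,\g h)\Rightarrow Tem(\g g,\g h)$, however, there is a genuine gap. You posit a disintegration of $L^2(G/H)|_L$ into a direct integral of character-induced representations $\Ind_{S_x}^L(\chi_x)$, but this structure does not exist in general: when $V\not\subset U$ the $L$-orbit space of $G/H$ is not a union of $L$-homogeneous pieces carrying characters, and the "cross-terms between the two unipotent radicals" you allude to are not resolved by any such decomposition. The paper instead introduces a qualitatively different tool, the notion of \emph{domination} of $G$-spaces (Definition \ref{defdomrep}): one factors $G/H = G\times_F Z$ through the intermediate group $F=HU$, shows $Z=F/H$ is $F$-dominated by a flat model $Z_0\simeq\g u/(\g u\cap\g v)$ (Proposition \ref{prounidom}, using the nilpotent structure Lemma \ref{lemuvw}), induces the domination up to $G$ (Proposition \ref{proinddom}), identifies the dominating $L$-space as $L\times_{F'}W'$ with $F'=F\cap L$ reductive-times-unipotent inside $L$, and only then invokes the already-proved reductive case \cite[Theorem 3.6]{BeKoII}. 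The majoration of matrix coefficients is thus organized geometrically via domination, not by a direct spectral decomposition into characters. Without this mechanism, your "weight computation on a split Cartan" has no rigorous content to attach to, so this half of the argument would not go through as written.
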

\begin{Rem}
For real algebraic groups, the last condition $Sla(\g g,\g h) $
is not always equivalent to the first two, 
 but it is often the case.
For instance, we will see in Theorem \ref{thmtemrhosla}, 
that this is true for complex algebraic Lie groups.  
\end{Rem}
In the induction process, 
we will have to work with slightly 
more general representations than the regular representation $L^2(G/H)$.
Let $W$ be a finite-dimensional algebraic repre\-sentation of $H$.
We will have to deal with the ($L^2$-)induced representation 
${\rm Ind}_H^G(L^2(W))\simeq L^2(G\times_H W)$,
where $G\times_HW$ is the  
$G$-equivariant bundle
over $G/H$ with fiber $W$, 
see \cite[Section 2.1]{BeKoII} for more precise definition.
This is why we also introduce the following two conditions.
\begin{eqnarray*}
\label{eqndeftemrhow}
Tem(\g g,\g h,W)& :&
{\rm Ind}_H^G(L^2(W))\; \mbox{ is $L$-tempered}.\\
Rho(\g g,\g h,W)&:&
\rho_{\gs l}\leq 2\,\rho_{\gs g/\gs h}+2\,\rho_W
\;\mbox{as a functions on $\g s$}.
\end{eqnarray*}

The following theorem is a generalization of our Theorem 3.6 
in \cite{BeKoII} where we assumed that $G$ is semisimple.

\begin{Thm} 
\label{thmghw}
Let $G$ be a 
real algebraic Lie group,
$H$ be an 
algebraic Lie subgroup
and $W$ a finite-dimensional algebraic represen\-tation of $H$. 
One has the equivalence,\\
\centerline{$Tem(\g g, \g h,W) \Longleftrightarrow Rho(\g g, \g h,W).$} 
\end{Thm}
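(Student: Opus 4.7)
The plan is to extend the proof of \cite[Thm.~3.6]{BeKoII}, which treated the semisimple case, to the algebraic setting by propagating its arguments through the Levi decomposition $G=LU$. The central analytic tool will remain the Hertz majoration principle for unitary representations. The direction $Tem(\g g,\g h,W)\Longrightarrow Rho(\g g,\g h,W)$ should be established by restricting matrix coefficients of ${\rm Ind}_H^G(L^2(W))$ along one-parameter semigroups $e^{tY}$ with $Y\in\g s$. The assumed $L$-temperedness forces these coefficients to satisfy an $L^{2+\eps}$ bound on $[L,L]$, and equating the asymptotics in $t$ of the Jacobian of the $H$-action on $G\times_H W$ with those of the matrix coefficients of the $L$-regular representation along $e^{tY}$ yields exactly $\rho_{\gs l}(Y)\leq 2\rho_{\gs g/\gs h}(Y)+2\rho_W(Y)$.

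For the converse $Rho\Longrightarrow Tem$ I would first reduce to the case of trivial $W$ and connected $H$ by induction-in-stages and standard Mackey techniques, and then set up a pointwise majoration
\begin{equation*}
|\langle \pi(l)v,w\rangle|\;\leq\; C_{v,w}\,\Xi_{[L,L]}(l),\qquad l\in L,
\end{equation*}
where $\pi={\rm Ind}_H^G(L^2(W))$ and $\Xi_{[L,L]}$ is the Harish--Chandra function of $[L,L]$. Such a majoration is obtained by writing the matrix coefficient as a double integral over $G\times_H W$ and applying Cauchy--Schwarz against a kernel supported on an Iwasawa-type section; its convergence is controlled by the infinitesimal volume changes along the $\g s$-actions on $\g g/\g h$, on $W$ and on $\g l$, and the hypothesis $Rho(\g g,\g h,W)$ is precisely what makes the relevant integral finite. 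The Hertz majoration principle then upgrades this pointwise bound to $L$-temperedness of $\pi$.

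The main obstacle is the interaction between the unipotent radicals $U$ of $G$ and $V$ of $H$, which in general are not nested. In \cite{BeKoII} one had $U=\{0\}$ and the disintegration of ${\rm Ind}_H^G(L^2(W))$ factored cleanly; here one must fiber it over $L\backslash G/H$ in a way that keeps the $U$- and $V$-contributions under control simultaneously. Carrying out the Fubini argument so that the $U$-contributions on the two sides of the majoration cancel correctly --- and in particular do not perturb the $\rho$-inequality restricted to $\g s$ --- is where the proof will need the most care; choosing a basis of $\g v$ compatible with the splitting $\g g=\g l\oplus\g u$ should reduce the estimate back to the reductive case already handled in \cite{BeKoII}.
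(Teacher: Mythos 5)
Your direct implication matches the paper in spirit: the paper (Proposition~\ref{proghw1}) also compares the asymptotics of volumes ${\rm vol}(e^{Y}C\cap C)$ on $G\times_H W$ with the asymptotics of the Harish-Chandra function $\Xi$ along $e^{Y}$ with $Y\in\g a\subset\g s$, using the Cowling--Haagerup--Howe bound and Lemma~\ref{lemrovvol}. Your phrasing in terms of an $L^{2+\eps}$ bound is an equivalent way of stating $L$-temperedness, so that part of your plan is sound.

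The converse, however, has a genuine gap. You correctly identify that the crux is the fact that $V\not\subset U$, but the remedy you propose --- a direct Cauchy--Schwarz/Fubini estimate and ``choosing a basis of $\g v$ compatible with the splitting $\g g=\g l\oplus\g u$'' --- does not supply a concrete reduction to the reductive case, because $\g v$ generally has nontrivial components in both factors, and a basis choice alone does not tell you how to bound the matrix coefficient of ${\rm Ind}_H^G(L^2(W))$ by $\Xi_{[L,L]}$. The machinery the paper actually uses, and which is absent from your plan, is the notion of \emph{domination of $G$-spaces} (Definition~\ref{defdomrep}): one shows first that the $F$-space $Z=F/H$, where $F:=HU$, is dominated by the linearized $F$-space $Z_0\simeq\g u/(\g u\cap\g v)$ with trivialized $VU$-action (Proposition~\ref{prounidom}), and then that induction of actions preserves domination (Proposition~\ref{proinddom}), so that $G/H$ is $L$-dominated by $L\times_{F'}W'$ with $F'=F\cap L$ and $W'=\g u/(\g u\cap\g v)$. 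Only after these two steps can one invoke the reductive case of \cite[Theorem~3.6]{BeKoII}, and the $\rho$-arithmetic $\rho_{\g g/\g h}=\rho_{\g l/\g f'}+\rho_{W'}$ is exactly what makes the hypothesis transfer. A direct pointwise majoration of the kind you sketch would essentially have to reprove this domination bound by hand, and your description of the kernel and the Fubini bookkeeping is too vague to see that it does.

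You also underestimate the $W\neq 0$ reduction. ``Induction in stages'' gives the disintegration $L^2(G\times_H W)=\int^\oplus_T L^2(G/H_w)\,d\nu(w)$ as in \eqref{eqnregint0}, but to convert the hypothesis $Rho(\g g,\g h,W)$ into $Rho(\g g,\g h_w)$ for generic $w$ you need the nontrivial fact \cite[Lemma~3.9]{BeKoII} that the generic stabilizer $H_w$ acts trivially on $W/(\g h\,w)$; this is what yields $\rho_{\g g/\g h}+\rho_{\g h/\g h_w}=\rho_{\g g/\g h_w}$ on $\g s_w$. Without that identity the reduction to $W=0$ does not follow from Mackey theory alone, and your plan does not mention it.
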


We have assumed here that $G$ and $H$ are algebraic 
only to avoid uninteres\-ting technicalities.
It is not difficult to get rid of this assumption.

\begin{proof}[Proof of Theorem \ref{thmgh}]
It is a special case of Theorem \ref{thmghw} 
with $W=0$.
\end{proof}
The proof of Theorem \ref{thmghw} follows the same line 
as in \cite[Theorem 3.6]{BeKoII}.\\ 
In this Chapter \ref{secreaalg}
we will prove the direct implication $\Longrightarrow$.\\
In the next Chapter \ref{secdomuni},
we will prove the converse implication $\Longleftarrow$.

\subsection{The Herz majoration principle}
\label{sechermaj}

\bq
We first recall a few lemmas on tempered representations
and on induced representations.
\eq

The first lemma is a variation on Herz majoration principle. 

\begin{Lem} 
\label{lemindtem}
Let $G$ be a real algebraic Lie group, 
$L$ be a reductive algebraic Lie subgroup of $G$
and $H$ be a closed subgroup of $G$.
If the regular representa\-tion in $L^2(G/H)$ is $L$-tempered then the induced representation 
$\Pi=\Ind_H^G(\pi)$ is also $L$-tempered
for any  unitary representation $\pi$ of $H$.
\end{Lem}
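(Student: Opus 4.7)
The plan is to prove the lemma by the classical Herz majoration trick: matrix coefficients of $\Pi=\mathrm{Ind}_H^G(\pi)$ are pointwise dominated by matrix coefficients of the regular representation $\lambda$ on $L^2(G/H)$. Once this pointwise majoration is established on all of $G$, restricting it to $L$ and combining with the assumed $L$-temperedness of $\lambda$ gives $L$-temperedness of $\Pi$.

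In more detail, I would realize $\mathrm{Ind}_H^G(\pi)$ as the Hilbert space of measurable $\mathcal{H}_\pi$-valued functions $f$ on $G$ satisfying $f(gh)=\pi(h)^{-1}f(g)$ and $\int_{G/H}\|f(g)\|^2\,dg<\infty$ (using a $G$-invariant measure on $G/H$; in the non-unimodular case one twists by the modular character as usual, but this does not affect the argument). For any such $f$, unitarity of $\pi$ implies that the scalar function $F(g):=\|f(g)\|_{\mathcal{H}_\pi}$ is right $H$-invariant and belongs to $L^2(G/H)$ with $\|F\|_2=\|f\|$. Then, for $f_1,f_2\in\mathrm{Ind}_H^G(\pi)$ and any $g_0\in G$, applying the Cauchy--Schwarz inequality in the fiber $\mathcal{H}_\pi$ under the integral yields
\[
|\langle\Pi(g_0)f_1,f_2\rangle|
\;\le\;\int_{G/H}\|f_1(g_0^{-1}g)\|\,\|f_2(g)\|\,dg
\;=\;\langle\lambda(g_0)F_1,F_2\rangle_{L^2(G/H)}.
\]

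Now restrict $g_0$ to $L$. By hypothesis $\lambda$ is $L$-tempered, so for any $\varepsilon>0$ the matrix coefficient $l\mapsto\langle\lambda(l)F_1,F_2\rangle$ lies in $L^{2+\varepsilon}(L)$, with norm controlled by $\|F_1\|\,\|F_2\|=\|f_1\|\,\|f_2\|$ (through the appropriate Harish-Chandra type estimate). By the pointwise majoration above, the same $L^{2+\varepsilon}$ bound holds for the matrix coefficient $l\mapsto\langle\Pi(l)f_1,f_2\rangle$. By the Cowling--Haagerup--Howe criterion (applied to the reductive group $L$, or equivalently to its semisimple part $[L,L]$), this implies that $\Pi|_L$ is weakly contained in the regular representation of $L$, i.e.\ $\Pi$ is $L$-tempered.

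The only mildly delicate point is the first paragraph: making sure the fiberwise Cauchy--Schwarz bound is performed in the correct model of the induced representation (taking account of modular characters when $G/H$ carries only a quasi-invariant measure, as is typical when $H$ is non-unimodular). Once that bookkeeping is done, the argument is essentially a direct transcription of Herz's principle, and the reduction of $L$-temperedness to an $L^{2+\varepsilon}$-bound on matrix coefficients does the rest.
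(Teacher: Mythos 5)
Your fiberwise Cauchy--Schwarz estimate
\[
|\langle\Pi(g_0)f_1,f_2\rangle|\;\le\;\langle\lambda(g_0)F_1,F_2\rangle,
\qquad F_i(g):=\|f_i(g)\|_{\mathcal H_\pi},
\]
is exactly the Herz majoration principle that the paper invokes here (it cites \cite[Lemma~3.2]{BeKoII}, and indeed names the subsection after this principle), and that part is correct, including your remark that the modular-character bookkeeping does not affect the inequality. The gap is in your second paragraph. $L$-temperedness of $\lambda$ does \emph{not} give you, for arbitrary $F_1,F_2\in L^2(G/H)$, an $L^{2+\varepsilon}(L)$-bound ``with norm controlled by $\|F_1\|\,\|F_2\|$'': the Cowling--Haagerup--Howe estimate (Proposition~\ref{protemal2}) is stated for $K$-finite vectors and carries the extra factor $\dim\langle Kv\rangle$, and the scalar functions $F_i=\|f_i\|$ have no reason to be $K$-finite even when $f_i$ is. The $L^{2+\varepsilon}$ characterization of temperedness only guarantees the property on a \emph{dense} set of vectors, and you have not argued that your particular $F_1,F_2$ lie in such a set.

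This is repairable, but the repair has to be written down. Take $f_1,f_2$ compactly supported and bounded (a dense set in $\Pi$), so that $F_1,F_2$ are nonnegative, bounded, compactly supported functions on $G/H$. Replace each $F_i$ by its $K$-invariant majorant $F_i'(x):=\sup_{k\in K}F_i(kx)$, again nonnegative, bounded, compactly supported. Since $0\le F_i\le F_i'$, nonnegativity of the kernel gives $\langle\lambda(\ell)F_1,F_2\rangle\le\langle\lambda(\ell)F_1',F_2'\rangle$ for all $\ell$. Now apply Corollary~\ref{coralpcol} to a $K$-invariant compact $C$ containing $\operatorname{supp}F_1'\cup\operatorname{supp}F_2'$: this yields $\langle\lambda(\ell)F_1',F_2'\rangle\le M\,\Xi(\ell)$ for some constant $M$ depending only on $f_1,f_2$. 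Combining with the Herz majoration gives $|\langle\Pi(\ell)f_1,f_2\rangle|\le M\,\Xi(\ell)$ for all $\ell\in L$ and a dense family of $f_1,f_2$, which is exactly the $\Xi$-bound implying $L$-temperedness of $\Pi$ (by the converse direction of Proposition~\ref{protemal2}, or because $\Xi\in L^{2+\varepsilon}(L)$ for all $\varepsilon>0$). So the strategy is sound; it only needs the $K$-invariant-majorant step, or some equivalent device, before the temperedness criterion can be applied to the functions $F_i$ you produce.
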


\begin{proof}
See for instance \cite[Lemma 3.2]{BeKoII}.
\end{proof}

The second lemma will 
prevent us to worry about connected components of $H$ and will
allow us to assume that $H=[H,H]$.

\begin{Lem} 
\label{lemghghh}
Let $G$ be a real algebraic Lie group, 
$L$ be a reductive algebraic subgroup of $G$
and $H'\subset H$ be two closed subgroup of $G$.	\\
$1)$ If $L^2(G/H)$ is $L$-tempered then $L^2(G/H')$ is $L$-tempered.\\
$2)$ The converse is true when $H'$ is normal in $H$ and $H/H'$ is amenable
(for instance finite,  compact, or abelian).
\end{Lem}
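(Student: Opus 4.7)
The approach is to deduce both parts from standard manipulations of induced representations, using Lemma \ref{lemindtem} as the main analytic input and the Fell continuity of induction with respect to weak containment.

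For part $(1)$, I would invoke induction in stages. Since $H'\subset H$ are closed subgroups, one has the unitary isomorphism
$$L^2(G/H')\simeq \Ind_{H'}^G(\mathbf{1})\simeq \Ind_H^G\bigl(\Ind_{H'}^H(\mathbf{1})\bigr)\simeq \Ind_H^G\bigl(L^2(H/H')\bigr).$$
Since $L^2(G/H)$ is assumed to be $L$-tempered, Lemma \ref{lemindtem} applied with the unitary $H$-representation $\pi=L^2(H/H')$ yields immediately that $L^2(G/H')$ is $L$-tempered.

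For part $(2)$, the amenability of the quotient $H/H'$ means that the trivial representation $\mathbf{1}_{H/H'}$ is weakly contained in the regular representation $L^2(H/H')$. Lifting via the projection $H\to H/H'$, this means that $\mathbf{1}_H$ is weakly contained in $L^2(H/H')$ as a representation of $H$. Fell's continuity of induction with respect to weak containment then gives
$$L^2(G/H)=\Ind_H^G(\mathbf{1}_H)\;\prec\;\Ind_H^G\bigl(L^2(H/H')\bigr)\simeq L^2(G/H'),$$
where $\prec$ denotes weak containment. Restriction to the subgroup $L$ preserves weak containment, and the class of representations weakly contained in the regular representation of $L$ is itself closed under weak containment; hence the $L$-temperedness of $L^2(G/H')$ propagates to $L^2(G/H)$.

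The only mildly delicate point in this plan is invoking Fell's continuity of induction in the not-necessarily-unimodular algebraic setting, but this is a standard property of induction between closed subgroups of locally compact groups. All remaining steps are elementary manipulations of unitary representations, and no input from the deeper structural theory developed elsewhere in the paper is required.
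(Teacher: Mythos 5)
Your proof is correct. The paper itself does not give an argument here: it simply points to \cite[Proposition~3.1]{BeKoII}. Your proposal fills that gap with a self-contained argument, and the two tools you use --- induction in stages combined with Lemma~\ref{lemindtem} for part~(1), and Fell continuity of induction together with the weak-containment characterization of amenability for part~(2) --- are exactly the standard ingredients one expects behind the cited proposition. A few points worth keeping in mind: in part~(2), the step ``restriction to $L$ preserves weak containment'' and the step ``$L$-temperedness is closed under weak containment'' are both needed and both correct, and it is good that you state them explicitly rather than silently passing from $L^2(G/H)\prec L^2(G/H')$ as $G$-representations to the $L$-tempered conclusion. Your caveat about non-unimodularity is well placed but harmless: Fell's continuity theorem for induction holds for arbitrary closed subgroups of locally compact groups with no unimodularity hypothesis, provided one works with the appropriate modular corrections (equivalently, with half-densities), which is also the setting implicit in the paper's use of $L^2(G/H)$. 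So your route is a legitimate direct proof of the statement the paper treats as imported from elsewhere, and it requires nothing beyond Lemma~\ref{lemindtem} and classical facts about induction and weak containment.
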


\begin{proof}
See for instance \cite[Proposition 3.1]{BeKoII}.
\end{proof}

The third lemma is good to keep in mind.

\begin{Lem}
\label{lemparsub2}
Let $Q=LU$ be a real algebraic Lie group which is
a semidirect product of a reductive subgroup $L$ 
and its unipotent radical $U$. 
Let $\pi_0$ be a unitary representation
of $Q$ which is $L$-tempered and trivial on $U$.
Then the representation $\pi_0$ is also $Q$-tempered.
\end{Lem}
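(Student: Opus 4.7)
The plan is to show that, writing $p\colon Q \to L \cong Q/U$ for the canonical projection, the inflation from $L$ to $Q$ of the regular representation $L^2(L)$ is weakly contained in the regular representation $L^2(Q)$. Once this is established, $Q$-temperedness of $\pi_0$ will follow by transitivity of weak containment. Indeed, the triviality of $\pi_0$ on $U$ lets us factor $\pi_0 = \widetilde{\pi}_0\circ p$ through an $L$-representation $\widetilde{\pi}_0$, the $L$-temperedness hypothesis says $\widetilde{\pi}_0 \prec L^2(L)$ as $L$-representations, and this weak containment survives composition with the continuous surjection $p$ (matrix coefficients of $\widetilde{\pi}_0\circ p$ are matrix coefficients of $\widetilde{\pi}_0$ precomposed with $p$, and $p$ sends compact sets of $Q$ to compact sets of $L$). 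Hence $\pi_0 \prec L^2(L)\circ p$ as $Q$-representations.

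The heart of the argument is therefore to check the weak containment
\[
L^2(L)\circ p \;\prec\; L^2(Q) \quad\text{as $Q$-representations.}
\]
I would realize both sides as representations induced from $U$ to $Q$. On the one hand, by definition of the quotient measure, $L^2(L)\circ p \cong L^2(Q/U) \cong \operatorname{Ind}_U^Q(\mathbf{1}_U)$, where $\mathbf{1}_U$ denotes the trivial representation of $U$. On the other hand, induction in stages together with $L^2(U) = \operatorname{Ind}_{\{e\}}^U(\mathbf{1})$ gives $L^2(Q) \cong \operatorname{Ind}_U^Q(L^2(U))$. The unipotent group $U$ is nilpotent, hence amenable, and so the trivial representation of $U$ is weakly contained in its regular representation: $\mathbf{1}_U \prec L^2(U)$. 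Fell's continuity of induction, which preserves weak containment under $\operatorname{Ind}_U^Q$, then yields precisely the required containment.

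No step here is genuinely delicate; the only input of substance is the amenability of the unipotent radical $U$. The remainder is a formal manipulation of weak containment using standard tools (inflation, induction in stages, and Fell's continuity theorem) that are already implicit in the framework of the paper, in particular in the proof of Lemma~\ref{lemindtem}.
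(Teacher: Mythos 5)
Your proof is correct. The chain $\pi_0 = \widetilde\pi_0\circ p \prec L^2(L)\circ p \cong \operatorname{Ind}_U^Q(\mathbf 1_U) \prec \operatorname{Ind}_U^Q(L^2(U)) \cong L^2(Q)$ is valid: the first weak containment holds because inflation along the continuous surjection $p$ preserves weak containment (compact sets of $Q$ map to compact sets of $L$, and matrix coefficients of the inflation are pullbacks of matrix coefficients on $L$); the identification of the inflation with $\operatorname{Ind}_U^Q(\mathbf 1_U)$ is the quasi-regular representation on $Q/U\cong L$, which carries a genuinely $Q$-invariant measure since $U$ is normal; the second weak containment is Fell continuity applied to $\mathbf 1_U\prec L^2(U)$, which holds because $U$ is unipotent, hence nilpotent, hence amenable; and the last isomorphism is induction in stages. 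The paper itself gives no independent argument here, only a citation to \cite[Lemma 4.3]{BeKoII}, so a direct comparison is not possible; but your argument is fully self-contained and uses precisely the kind of amenability-plus-induction machinery that underlies the neighboring results (notably the Herz majoration principle of Lemma~\ref{lemindtem}), so it fits naturally into the paper's framework. The one point worth spelling out in a final write-up is the measure-theoretic detail that $Q/U\cong L$ admits a $Q$-invariant (not merely quasi-invariant) measure, so that $\operatorname{Ind}_U^Q(\mathbf 1_U)$ really is the untwisted inflation of $L^2(L)$ with no Radon--Nikodym correction.
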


\begin{proof}
See for instance \cite[Lemma 4.3]{BeKoII}.
\end{proof}

This lemma is useful for a parabolic subgroup $Q$
of a semisimple Lie group $G$. In this case the induced representation
${\rm Ind}_Q^G(\pi_0)$ is also $G$-tempered.

\subsection{Decay of matrix coefficients}
\label{sectemrep2}

\bq
We now recall the control of the matrix coefficients of tempe\-red 
representations of a reductive Lie group.
\eq

In the sequel, it will be more comfortable to deal with a reductive group $L$ than just with a semisimple group even though, 
in the temperedness condition, the center $Z_L$ of $L$ 
plays no role.

So, let $L$ be a real reductive
algebraic Lie group.
We fix a maximal compact subgroup $K$ of $L$ and denote by $\Xi$ 
the Harish-Chandra spherical function on $L$.
By definition, $\Xi$ is  the matrix coefficient of a normalized $K$-invariant vector $v_0$
of the spherical unitary principal representation 
$\pi_0={\Ind}_P^L({\bf 1}_P)$ 
where $P$ is a minimal parabolic subgroup of $L$.
That is 
\begin{equation}
\label{eqnxighar}
\Xi(\ell)=\langle\pi_0(\ell)v_0,v_0\rangle
\; ,\;\; 
\mbox{\rm for all $\ell$ in $L$}. 
\end{equation}
Since $P$ is amenable, the representation $\pi_0$ is $L$-tempered.

\begin{Prop} 
[Cowling, Haagerup and Howe {\cite{CHH}}]
\label{protemal2}
Let $L$ be a real algebraic reductive Lie group
 and $\pi$ be a unitary representation  of $L$. The following are equivalent:\\
$(i)$ the representation $\pi$ is tempered,\\
$(ii)$ for every $K$-finite vector $v$ in ${\mathcal H}_\pi $, for every $\ell$ in $L$,
one has 
$$
|\langle  \pi(\ell)v,v\rangle |\leq \Xi(\ell)\, 
\| v\|^2\dim \langle  K v\rangle .
$$
\end{Prop}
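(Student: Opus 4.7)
The plan is to establish the two implications separately, both resting on Harish-Chandra's sharp estimates for the spherical function $\Xi$.

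For $(ii) \Rightarrow (i)$, I would invoke Harish-Chandra's classical integrability estimate $\Xi \in L^{2+\varepsilon}(L)$ for every $\varepsilon > 0$ (deduced from the pointwise bound $\Xi(a) \ll a^{-\rho}(1+\|\log a\|)^N$ together with the Weyl integration formula on $L$). The hypothesised pointwise bound then places every diagonal matrix coefficient $\ell \mapsto \langle \pi(\ell) v, v\rangle$ with $v \in \mc H_\pi$ a $K$-finite vector in $\bigcap_{\varepsilon > 0} L^{2+\varepsilon}(L)$. Since $K$-finite vectors are dense in $\mc H_\pi$, the standard criterion that a unitary representation admitting such a dense subset of diagonal matrix coefficients is weakly contained in the regular representation of $L$ (equivalently, $L$-tempered) concludes this easier direction.

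For $(i) \Rightarrow (ii)$, I would fix a $K$-finite $v \in \mc H_\pi$, set $V := \langle Kv\rangle$, $d := \dim V$, and write $P_V$ for the orthogonal projection onto $V$. Cauchy--Schwarz gives
$$|\langle \pi(\ell)v,v\rangle| \;=\; |\langle P_V\pi(\ell)P_V\,v,v\rangle| \;\le\; \|P_V\pi(\ell)P_V\|_{\mathrm{op}}\,\|v\|^2,$$
so it is enough to establish the operator-norm bound $\|P_V\pi(\ell)P_V\|_{\mathrm{op}} \le d\,\Xi(\ell)$. Since $V$ is $K$-stable, this quantity is a $K$-bi-invariant function on $L$. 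The substance of \cite{CHH} is to disintegrate $\pi$ into a direct integral of irreducible tempered representations and to reduce the bound to the scalar spherical estimate asserting that any tempered $K$-bi-invariant positive-definite function $\varphi$ on $L$ satisfies $|\varphi(\ell)| \le \varphi(e)\,\Xi(\ell)$; summing the contributions from the finitely many $K$-types appearing in $V$ produces the combinatorial factor $d$.

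The main obstacle is precisely this operator-norm bound: passing from the scalar Harish-Chandra estimate on spherical functions to an estimate controlling an entire $K$-invariant finite-dimensional block of an arbitrary tempered $\pi$ requires a careful direct-integral argument together with the compatibility of the $K$-type decomposition with the Plancherel formula. Once the bound is in hand, the proposition follows from Cauchy--Schwarz as displayed above, and in the present context I would simply quote \cite{CHH} for this non-trivial step.
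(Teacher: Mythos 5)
The paper does not supply its own proof of this proposition: it simply cites Cowling--Haagerup--Howe, ``See [Thms.\ 1, 2 and Cor.] \cite{CHH}''. Your proposal likewise defers to \cite{CHH} for both directions (their Theorem~1 is what underlies the ``standard criterion'' you invoke for $(ii)\Rightarrow(i)$, and you quote \cite{CHH} explicitly for the operator-norm estimate in $(i)\Rightarrow(ii)$), and your surrounding sketch of the Cauchy--Schwarz reduction and the role of $\Xi\in L^{2+\varepsilon}(L)$ is a reasonable gloss on what that reference contains, so this is essentially the same approach as the paper.
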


See \cite[Thms.~1, 2 and Cor.]{CHH}.
See also  \cite{HoTa}, \cite{Nev98} for other applications
of Proposition \ref{protemal2}.

For the regular representation in an $L$-space, this proposition becomes:

\begin{Cor}
\label{coralpcol}
Let $L$ be a real algebraic reductive Lie group
and $X$ be a locally compact space endowed with a continuous action of $L$ 
preser\-ving a Radon measure ${\rm vol}$.
The regular representation of $L$ in $L^2(X)$  is $L$-tempered if and only if,
for any  $K$-invariant compact subset $C$ of $X$, one has
\begin{equation}
\label{eqnvolgcc}
{\rm vol} (\ell\, C\cap C)\leq {\rm vol}( C)\;\Xi(\ell) 
\; ,\;\;\mbox{\rm for all $\ell$ in $L$}.
\end{equation}
\end{Cor}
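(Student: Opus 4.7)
The approach is to apply Proposition \ref{protemal2} to $\pi := L^2(X)$, using indicator functions of $K$-invariant compact sets as test vectors.

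For the direct implication, given a $K$-invariant compact $C \subset X$, the vector $v := \mathbf{1}_C$ is $K$-invariant, so $\dim \langle K v\rangle = 1$. Since $L$ preserves ${\rm vol}$, one computes $\langle \pi(\ell) v, v\rangle = {\rm vol}(\ell C \cap C)$ and $\|v\|^2 = {\rm vol}(C)$, so Proposition \ref{protemal2}(ii) applied to $v$ yields exactly \eqref{eqnvolgcc}.

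For the converse, one must verify condition (ii) of Proposition \ref{protemal2} for $\pi$. I would proceed in two stages. First, inner regularity of the Radon measure ${\rm vol}$ together with the observation that $K A$ is a $K$-invariant compact whenever $A$ is compact extends \eqref{eqnvolgcc} to every $K$-invariant measurable set of finite measure. Second, for a nonnegative $K$-invariant $f \in L^2(X)$ of compact support, I would approximate $f$ by simple $K$-invariant functions $f_n = \sum_i c_{n,i}\, \mathbf{1}_{A_{n,i}}$ with $A_{n,i}$ $K$-invariant, and expand $\langle \pi(\ell) f_n, f_n\rangle$ as a bilinear sum in the volumes ${\rm vol}(\ell A_{n,i} \cap A_{n,j})$. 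The off-diagonal terms are controlled by applying the extended diagonal bound to the $K$-invariant set $A_{n,i} \cup A_{n,j}$; after standard bookkeeping this should yield $|\langle \pi(\ell) f, f\rangle| \leq \Xi(\ell)\|f\|^2$ for every $K$-invariant $f \in L^2(X)$. The extension from $K$-invariant to general $K$-finite vectors required by Proposition \ref{protemal2}(ii) is then a standard spherical reduction, with the factor $\dim\langle K v\rangle$ accounting for the $K$-types carrying $v$.

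The principal difficulty is controlling the off-diagonal volumes ${\rm vol}(\ell A \cap B)$ using only the hypothesized diagonal bound \eqref{eqnvolgcc}; this is a concrete geometric instance of Herz's majoration principle and constitutes the only nontrivial content of the converse direction.
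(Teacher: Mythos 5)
Your treatment of the direct implication is exactly right: taking $v=\mathbf{1}_C$ for a $K$-invariant compact $C$, one has $\dim\langle Kv\rangle=1$, $\|v\|^2={\rm vol}(C)$ and $\langle\pi(\ell)v,v\rangle={\rm vol}(\ell C\cap C)$, so Proposition \ref{protemal2}(ii) gives \eqref{eqnvolgcc} immediately.

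The converse direction as you sketch it does not close. The off-diagonal control you propose fails quantitatively: for disjoint $K$-invariant sets $A_i,A_j$, the bound on $A_i\cup A_j$ only gives ${\rm vol}(\ell A_i\cap A_j)\le\Xi(\ell)\bigl({\rm vol}(A_i)+{\rm vol}(A_j)\bigr)$, and plugging this into the bilinear expansion of $\langle\pi(\ell)f,f\rangle$ for $f=\sum_i c_i\mathbf{1}_{A_i}$ yields only
$\langle\pi(\ell)f,f\rangle\le 2\,\Xi(\ell)\bigl(\sum_j c_j\bigr)\bigl(\sum_i c_i{\rm vol}(A_i)\bigr)$,
which can be arbitrarily larger than $\Xi(\ell)\|f\|^2$ as the number of pieces grows. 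Likewise the final ``standard spherical reduction'' from $K$-invariant to general $K$-finite vectors with the factor $\dim\langle Kv\rangle$ is not standard here; there is no obvious way to dominate a $K$-finite matrix coefficient by $K$-invariant ones alone.

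The point is that you do not need the sharp CHH bound for all $K$-finite vectors. It suffices to produce, for a \emph{dense} set of vectors $v$, a bound $|\langle\pi(\ell)v,v\rangle|\le C_v\,\Xi(\ell)$ with $C_v$ independent of $\ell$: since $\Xi\in L^{2+\epsilon}(L)$ for every $\epsilon>0$, those matrix coefficients lie in $L^{2+\epsilon}(L)$, and by \cite{CHH} this forces weak containment in the regular representation. This weaker bound follows from \eqref{eqnvolgcc} in one line via the positivity of the regular representation. Take $v\in L^2(X)$ bounded with compact support, set $M=\|v\|_\infty$ and $C_0:=K\cdot\operatorname{supp}(v)$, a $K$-invariant compact. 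Then $|v|\le M\,\mathbf{1}_{C_0}$, and because $\pi(\ell)$ preserves nonnegativity and ${\rm vol}$ is $L$-invariant,
\begin{equation*}
|\langle\pi(\ell)v,v\rangle|\;\le\;\langle\pi(\ell)|v|,|v|\rangle\;\le\;M^2\,{\rm vol}(\ell C_0\cap C_0)\;\le\;M^2\,{\rm vol}(C_0)\,\Xi(\ell).
\end{equation*}
Such $v$ are dense in $L^2(X)$, which completes the converse. So: correct for the direct implication; genuine gap in the converse as written, but repairable by this domination argument rather than by the exact CHH estimate.
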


Recall that the notation  $\ell\, C$ denotes the set  
$\ell\, C: =\{\ell x\in X: x\in C\}$.

\subsection{The function $\rho_V$}
\label{secfunrov}

\bq
We now explain, following \cite[Section 2.3]{BeKoII} 
how to deal with the functions $\rho_V$ 
occurring in the temperedness criterion. 
\eq

Let $H$ be a real algebraic Lie group, 
$\g h$ its Lie algebra and $V$ be a real algebraic
finite-dimensional representation of $H$.
For all element $Y$ in $\g h$,
we consider the eigenvalues of $Y$ in $V$ and
we denote by $V_+$ and $V_-$ 
the largest vector subspaces of $V$ on which 
the real part of all the eigenvalues of  $Y$
are respectively positive and negative, and we set
\begin{eqnarray*}
	\label{eqnrhopmv}
	\rho_V(Y)
	&: =& \tfrac12\,{\rm {\operatorname{Tr}}}(Y|_{V_+})-
	\tfrac12\,{\rm {\operatorname{Tr}}}(Y|_{V_-}).
\end{eqnarray*}
Let $\g a=\g a_{\gs h}$ be a maximal split abelian Lie subalgebra of $\g h$ 
{\it{i.e.}} the Lie subalgebra of a 
maximal split torus $A$ of $H$.
The function $\rho_V$ on $\g h$ is completely 
determined by its restriction to $\g a$.   
Let $P_V$ be the set of weights of $\g a$ in $V$ 
and, for all $\al$ in $P_V$, let $m_\al:=\dim V_{\al}$ 
be the dimension of the corresponding 
weight space. Then one has the equality
\begin{equation}\label{eqnrhovys}
\rho_V(Y) = \tfrac12\sum_{\al\in P_V}m_\al|\al(Y)|
\;\;\;\mbox{for all $Y$ in $\g a$.} 
\end{equation}

For example, when $\g h$ is semisimple and $V = \g h$
via the adjoint action,
our function $\rho_{\gs h}$ is equal on each  positive Weyl chamber $\g a_+$
of $\g a$ to the sum of the corresponding positive roots {\it{i.e.}} to twice the usual ``$\rho$'' 
linear form.

The functions $\rho_V$ occurs in the volume estimate 
of Corollary \ref{coralpcol} through the following  Lemma.

\begin{Lem}
\label{lemrovvol}
Let $V=\m R^d$. Let 
$\g a$ be an abelian split Lie subalgebra of ${\rm End}(V)$ and $C$ be a compact neighborhood of $0$ in $V$.
Then there exist constants $m_{_C } >0$ , $M_{_C }  > 0$ such that
$$
m_{_C } e^{-\rho_V(Y)}
\le e^{-{\rm Tr}(Y)/2}\,{\rm vol}(e^Y C \cap C)
\le M_{_C }  e^{-\rho_V(Y)}
\;\;\mbox{\rm for all $Y \in \g a$. }
$$
\end{Lem}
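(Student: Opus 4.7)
The plan rests on reducing to coordinates adapted to $\g a$. Since $\g a$ is abelian and split in $\operatorname{End}(V)$, I decompose $V = \bigoplus_{\alpha \in P_V} V_\alpha$ into simultaneous real eigenspaces, where each weight $\alpha : \g a \to \m R$ satisfies $\dim V_\alpha = m_\alpha$ and $e^Y$ acts on $V_\alpha$ by multiplication by $e^{\alpha(Y)}$. With respect to this decomposition, the subspaces $V_\pm$ appearing in the definition of $\rho_V(Y)$ are precisely $\bigoplus_{\pm\alpha(Y)>0} V_\alpha$, and the formula \eqref{eqnrhovys} becomes manifest.

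The core computation is the case where $C$ is a \emph{box} $B = \prod_\alpha B_\alpha$, with each $B_\alpha$ a centered Euclidean ball of some radius $r_\alpha$ in $V_\alpha$. Since $e^Y$ respects this product structure, the condition $x \in e^Y B \cap B$ decouples coordinate-wise: the $\alpha$-component $x_\alpha$ must satisfy both $\|x_\alpha\| \le r_\alpha$ and $\|e^{-\alpha(Y)} x_\alpha\| \le r_\alpha$, i.e.\ $\|x_\alpha\| \le r_\alpha \min(1, e^{\alpha(Y)})$. Taking the product of these slice volumes over all weights yields
$$
\operatorname{vol}(e^Y B \cap B) \;=\; \operatorname{vol}(B)\, \prod_{\alpha(Y)<0} e^{m_\alpha \alpha(Y)} \;=\; \operatorname{vol}(B)\, \exp\!\Bigl(\sum_{\alpha(Y)<0} m_\alpha \alpha(Y)\Bigr).
$$
A brief accounting using $\operatorname{Tr}(Y) = \sum_\alpha m_\alpha \alpha(Y)$ then gives the identity $-\tfrac12 \operatorname{Tr}(Y) + \sum_{\alpha(Y)<0} m_\alpha \alpha(Y) = -\tfrac12 \operatorname{Tr}(Y|_{V_+}) + \tfrac12 \operatorname{Tr}(Y|_{V_-}) = -\rho_V(Y)$, so that the \emph{exact} equality $e^{-\operatorname{Tr}(Y)/2}\,\operatorname{vol}(e^Y B \cap B) = \operatorname{vol}(B)\, e^{-\rho_V(Y)}$ holds uniformly in $Y \in \g a$.

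For a general compact neighborhood $C$ of $0$, I sandwich $B_1 \subset C \subset B_2$ between two boxes adapted to the weight decomposition (the inner $B_1$ exists because $C$ is a neighborhood of $0$, the outer $B_2$ because $C$ is bounded). Monotonicity of $C' \mapsto \operatorname{vol}(e^Y C' \cap C')$ under inclusion then gives
$$
\operatorname{vol}(B_1)\, e^{-\rho_V(Y)} \;\le\; e^{-\operatorname{Tr}(Y)/2}\,\operatorname{vol}(e^Y C \cap C) \;\le\; \operatorname{vol}(B_2)\, e^{-\rho_V(Y)},
$$
so one can take $m_C := \operatorname{vol}(B_1)$ and $M_C := \operatorname{vol}(B_2)$. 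There is no genuine obstacle in this argument: it is essentially bookkeeping in adapted product coordinates. The hypothesis that $\g a$ is split is exactly what ensures that the weights $\alpha$ are real-valued, so that the coordinate description above is available and the expression $\tfrac12 \sum_\alpha m_\alpha|\alpha(Y)|$ matches the intrinsic definition of $\rho_V(Y)$.
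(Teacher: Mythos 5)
Your proof is correct. The weight-space decomposition $V = \bigoplus_\alpha V_\alpha$ is available precisely because $\g a$ is split, the exact formula for a product-box $B$ follows from the decoupling $e^Y B \cap B = \prod_\alpha (e^Y B_\alpha \cap B_\alpha)$, the trace bookkeeping correctly yields $e^{-\operatorname{Tr}(Y)/2}\operatorname{vol}(e^Y B \cap B) = \operatorname{vol}(B)\,e^{-\rho_V(Y)}$, and the sandwich $B_1 \subset C \subset B_2$ together with monotonicity of $C' \mapsto \operatorname{vol}(e^Y C' \cap C')$ under inclusion gives the two-sided bound with constants independent of $Y$. The paper does not reprove this lemma but cites \cite[Lemma~2.8]{BeKoII}; your argument is the natural one and, as far as can be told from the present paper, matches the intended approach.
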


\begin{proof}
This is \cite[Lemma 2.8]{BeKoII}.
\end{proof}

\subsection{The direct implication}
\label{secdirimp}

\bq
We  first prove the direct implication in Theorem \ref{thmghw}
which is~:
\eq

\begin{Prop}
\label{proghw1}
Let $G$ be a real algebraic Lie group,
$H$ an algebraic Lie subgroup of $G$ and 
$W$ an algebraic representation of $H$. Let
$L$ be a maximal reductive subgroup of $G$
containing a maximal reductive subgroup $S$ of $H$.\\
If $\Pi:=\Ind_H^G(L^2(W))$ is $L$-tempered then one has 
$\rho_{\gs l}  \le 2\,\rho_{\gs g/\gs h} +2\,\rho_W$ on $\g s$.
\end{Prop}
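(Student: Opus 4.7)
The plan is to mimic the semisimple proof \cite[Thm.~3.6]{BeKoII}, while incorporating the modular characters of $G$ and $H$ which arise once $G$ need no longer be unimodular. The idea is a two-sided estimate of matrix coefficients of $\Pi$ along a ray in $\g a\subset\g s$: a lower bound coming from an explicit $K_L$-finite test section, and an upper bound from $L$-temperedness via the Harish-Chandra spherical function $\Xi$ of $L$ (Proposition~\ref{protemal2}).

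Concretely, realize $\Pi=\Ind_H^G(L^2(W))$ on $L^2$-sections of the $G$-homogeneous bundle $G\times_H W$. Fix a maximal split abelian $\g a\subset\g s$, a positive chamber $\g a_+$, and a maximal compact $K_L\subset L$. Using a vector-space complement of $\g h$ in $\g g$ and the exponential map, open a slice chart around $o=eH$ and trivialize the bundle over a compact $K_L$-invariant neighborhood $C$ of the origin. Take a test section of the form $v=\chi_C\otimes f$ with $f\colon C\to W$ chosen exactly as in \cite[Sec.~3]{BeKoII} so that the combinatorics of the weights of $\g a$ on $W$ and on $\g g/\g h$ materialize $\rho_W$ and $\rho_{\gs g/\gs h}$ simultaneously. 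For $Y\in\g a_+$ and $t\to+\infty$, the element $\exp(tY)\in H$ fixes $o$ and linearizes in the chart as $\operatorname{Ad}(\exp tY)$ on $\g g/\g h$ in the base and as the $H$-representation on $W$ in the fiber. Expressing $\langle\Pi(\exp tY)v,v\rangle$ as an integral, invoking Lemma~\ref{lemrovvol} for $V=\g g/\g h$ to estimate the volume overlap in the base, and using \eqref{eqnrhovys} for the fiber pairing, one obtains a lower bound of the form
\[
|\langle\Pi(\exp tY)v,v\rangle|\;\gtrsim\;e^{-t(\rho_{\gs g/\gs h}(Y)+\rho_W(Y))},
\]
provided the prefactor $\tfrac12\operatorname{Tr}(\operatorname{ad}Y|_{\gs g/\gs h})$ produced by Lemma~\ref{lemrovvol} is absorbed in the Radon--Nikodym derivative of the quasi-invariant measure on $G/H$ against the modular characters of $G$ and $H$. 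On the other hand, Proposition~\ref{protemal2} gives $|\langle\Pi(\exp tY)v,v\rangle|\le\Xi(\exp tY)\,\|v\|^2\dim\langle K_L v\rangle$, and the Harish-Chandra estimate on the reductive group $L$ yields $\Xi(\exp tY)\lesssim p(tY)\,e^{-t\,\rho_{\gs l}(Y)/2}$ for some polynomial $p$. Comparing exponential growth rates as $t\to+\infty$ produces $\tfrac12\rho_{\gs l}(Y)\le\rho_{\gs g/\gs h}(Y)+\rho_W(Y)$ for every $Y\in\g a_+$, and by the $W$-invariance of the $\rho$-functions this extends to all of $\g s$.

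The principal new obstacle compared with \cite{BeKoII} is the bookkeeping of the modular characters. In the semisimple case $G$ is unimodular and the trace prefactor from Lemma~\ref{lemrovvol} simply does not appear; here one needs the identity $\operatorname{Tr}(\operatorname{ad}Y|_{\gs g/\gs h})=\delta_{\gs g}(Y)-\delta_{\gs h}(Y)$ together with the transformation law of the quasi-invariant measure on $G/H$ to see that the trace term is exactly absorbed by the modular correction, so that the clean exponent $\rho_{\gs g/\gs h}+\rho_W$ survives on the right-hand side. Once this cancellation is verified, the rest of the argument reduces to routine rate comparison.
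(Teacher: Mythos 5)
Your proposal follows the paper's proof essentially step for step: the same realization of $\Pi$ on $L^2$-sections of $G\times_H W$, the same slice chart and indicator-type test section over $C=B_0\times B_W$, the same application of Lemma~\ref{lemrovvol} to base and fiber, and the same comparison with the Harish-Chandra bound $\Xi(e^{tY})\lesssim (1+\|tY\|)^d e^{-t\rho_{\gs l}(Y)/2}$. The one place you over-worry is the modular bookkeeping: the trace prefactor $e^{-\operatorname{Tr}(Y)/2}$ is already part of the statement of Lemma~\ref{lemrovvol} (= \cite[Lemma~2.8]{BeKoII}), which was needed even in the semisimple case because $H$ there is an arbitrary algebraic subgroup, so no genuinely new cancellation has to be verified when $G$ is no longer unimodular.
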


\begin{proof}
This representation $\Pi$ is also the regular representation of the $G$-space
$X:=G\times_HW$. 
Let $A$ be a maximal split torus of $S$ and $\g a$ be the Lie algebra of $A$.
We choose an $A$-invariant decomposition 
$\g g=\g h\oplus \g m$ and small closed balls $B_0\subset \g m$ and
$B_W\subset W$ centered at $0$. 
We can see $B_W$ as a subset of $X$ and the map
\begin{eqnarray}
\label{eqnbobvgv}
B_0\times B_W \longrightarrow G\times_H W, 
\qquad
(u,v)\mapsto \exp (u)v 
\nonumber
\end{eqnarray}
is a homeomorphism onto its image $C$. 
Since $\Pi$ is $L$-tempered one has a bound
as in   \eqref{eqnvolgcc}
\begin{equation}
\label{eqnpigxig}
\langle\Pi(\ell )1_C,1_C\rangle\leq M_C\; \Xi(\ell)
\;\;\;\mbox{\rm for all $\ell$ in $L$}.
\end{equation}
We will exploit this bound for elements $\ell=e^Y$ with $Y$ in $\g a$.
In our coordinate system \eqref{eqnbobvgv} we can choose 
the measure $\nu_X$ to coincide with the Lebesgue measure on 
$\g m\oplus W$. Taking into account the Radon--Nikodym derivative and the $A$-invariance of $\g m$,
one computes as in \cite[Section 3.3]{BeKoII},
\begin{eqnarray*}
\label{eqnpigvol}
\langle\Pi(e^Y)1_C,1_C\rangle
\!\!&\geq &\!\! 
e^{-{Tr}_{\gs m}(Y)/2-{Tr}_{W}(Y)/2}\;
{\rm vol}_{\gs m}(e^Y\!B_0\cap B_0)\;{\rm vol}_W(e^Y\!B_W\cap B_W),
\end{eqnarray*}
and therefore, using Lemma \ref{lemrovvol}, one deduces
\begin{eqnarray}
\label{eqnpigrho}
\langle\Pi(e^Y )1_C,1_C\rangle
&\geq &m_{_C}\,e^{-\rho_{\gs m}(Y) } e^{-\rho_W(Y)} 
\;\;\;\mbox{\rm for all $Y$ in $\g a$.} 
\end{eqnarray}
Combining  \eqref{eqnpigxig} and \eqref{eqnpigrho} 
with known bounds for the 
spherical function $\Xi$ as in \cite[Prop 7.15]{Kn01},
one gets, for suitable positive constants $d$, $M_0$,
\begin{equation*}
\label{eqnroxiro}
\frac{m_C}{M_C}e^{-\rho_{\gs m}(Y)  -\rho_W(Y) }\leq \Xi(e^Y)\leq M_0\;(1+\|Y\|)^{d}e^{-\rho_{\gs l}(Y)/2 }  
\;\;\;\mbox{\rm for all $Y$ in $\g a$.} 
\end{equation*}
Therefore one has $\rho_{\gs l}\leq 2\,\rho_{\gs m}+2\,\rho_W$
as required.
\end{proof}

\section{Proof of temperedness for real groups}
\label{secdomuni}

In this Chapter, we prove the converse implication
in Theorem \ref{thmghw} which is~:

\begin{Prop}
\label{proghw2}
Let $G$ be a real algebraic Lie group,
$H$ an algebraic Lie subgroup of $G$ and $W$ an algebraic representation of $H$. Let
$L$ be a maximal reductive subgroup of $G$
containing a maximal reductive subgroup $S$ of $H$.\\
If $\rho_{\gs l}  \le 2\,\rho_{\gs g/\gs h} +2\,\rho_{_W}$ on $\g s$, 
then $\Pi:=\Ind_H^G(L^2(W))$ is $L$-tempered.
\end{Prop}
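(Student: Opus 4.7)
The plan is to mirror the direct implication by verifying the volume criterion of Corollary \ref{coralpcol} for the $L$-action on $X:=G\times_H W$. Writing $K$ for a maximal compact of $L$ and $\Xi$ for the Harish-Chandra spherical function of $L$, the goal is to show that for every $K$-invariant compact $C\subset X$ there is $M_C>0$ with
\[
{\rm vol}(\ell\, C\cap C)\leq M_C\,\Xi(\ell)\qquad\text{for all }\ell\in L.
\]
First, Lemma \ref{lemghghh} permits replacing $H$ by any subgroup differing by an amenable factor; in particular we may assume $H$ is connected and that the Levi decomposition $H=SV$ is well-behaved relative to the chosen Levi decomposition $G=LU$. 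Next, Lemma \ref{lemindtem} lets us also assume that $W$ is trivial, if helpful, by pushing the representation $L^2(W)$ inside the induction.

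By the Cartan decomposition $L=K A_L^+ K$ and the $K$-bi-invariance obtained after averaging $C$, it suffices to prove the bound for $\ell=e^Y$ with $Y$ in the positive chamber of a maximal split abelian $\g a_{\gs l}\subset\g l$. Fix a maximal split abelian $\g a_{\gs s}\subset\g s$ sitting inside $\g a_{\gs l}$, choose an $\g a_{\gs s}$-invariant complement $\g m$ to $\g h$ in $\g g$, and take $C$ of the form $\exp(B_0)\cdot B_W$ for small closed balls $B_0\subset \g m$, $B_W\subset W$, exactly as in the proof of Proposition \ref{proghw1}. For $Y\in \g a_{\gs s}^+$, the upper-bound half of Lemma \ref{lemrovvol}, applied separately to $\g m$ and $W$ and combined with the Jacobian of the coordinate map $(u,v)\mapsto\exp(u)\cdot v$, yields
\[
{\rm vol}(e^Y C\cap C)\leq M_C\,e^{-\rho_{\gs g/\gs h}(Y)-\rho_W(Y)}.
\]
Using the standard Harish-Chandra lower bound $\Xi(e^Y)\geq c(1+\|Y\|)^{-d}e^{-\rho_{\gs l}(Y)/2}$ together with the hypothesis $\rho_{\gs l}\leq 2\,\rho_{\gs g/\gs h}+2\,\rho_W$ on $\g s$ closes the estimate on $\g a_{\gs s}^+$; the polynomial factor is absorbed by enlarging $C$ slightly.

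The delicate step is $Y\in\g a_{\gs l}^+$ whose component $Y_\perp$ in a complement of $\g a_{\gs s}$ inside $\g a_{\gs l}$ is nonzero. Since $e^{Y_\perp}$ does not normalize $H$, left-translation by $e^{Y_\perp}$ moves the origin $eH\in G/H$ out of every fixed compact as $\|Y_\perp\|\to\infty$, so the plan is to show that $e^Y C\cap C=\emptyset$ once $\|Y_\perp\|$ exceeds a constant depending on $C$, reducing back to the previous case. Making this quantitative by comparing the displacement of $e^{Y_\perp}H$ in $G/H$ with the diameter of $C$ is the main obstacle and the principal novelty beyond the semisimple situation of \cite[Theorem 3.6]{BeKoII}. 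A cleaner route, which is worth attempting first, is induction in stages through a parabolic-like subgroup $Q=HU\subset G$: one applies Lemma \ref{lemparsub2} to handle the unipotent radical $U$ of $G$ trivially and then Lemma \ref{lemindtem} to compress the induction to a subgroup where $\g a_{\gs l}$ and $\g a_{\gs s}$ coincide, at which point the hypothesis gives the full bound directly.
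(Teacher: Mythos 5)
Your proposal hits a genuine gap exactly where you flag it, and the obstacle is not a technicality that can be patched: it is the heart of the matter.

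The volume estimate via Lemma~\ref{lemrovvol} requires the element $e^{Y}$ to act \emph{linearly} on the coordinate chart $\g m\oplus W$, which forces $[Y,\g m]\subset\g m$ and $Y\in\g h$ (since $W$ is only an $\g h$-module). That pins $Y$ to $\g a_{\gs s}$, not to the full $\g a_{\gs l}$, and the hypothesis $\rho_{\gs l}\le 2\rho_{\gs g/\gs h}+2\rho_W$ is also only stated on $\g s$. Your proposed remedy — that $e^Y C\cap C=\emptyset$ once $\|Y_\perp\|$ is large — is false in general: the displacement of $e^{Y_\perp}$ on $G/H$ need not diverge (e.g.\ when $e^{Y_\perp}$ has conjugates in $H$), and even when the origin moves away, $C$ is a $K$-saturated compact in the \emph{bundle} $G\times_H W$, not a small ball, so translates can intersect far from the base point. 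The final "cleaner route" is too thin to close the gap: Lemma~\ref{lemparsub2} only converts $L$-temperedness to $Q$-temperedness for $\pi_0$ \emph{trivial on} $U$ (not the situation at hand), and Lemma~\ref{lemindtem} points the \emph{wrong way} — it deduces temperedness of $\Ind_H^G(L^2(W))$ from temperedness of $L^2(G/H)$, which one cannot assume here because $\rho_{\gs l}\le 2\rho_{\gs g/\gs h}+2\rho_W$ is strictly weaker than $\rho_{\gs l}\le 2\rho_{\gs g/\gs h}$.

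The paper sidesteps the $\g a_{\gs l}\setminus\g a_{\gs s}$ problem entirely by not trying to verify the volume bound on $G/H$ directly. It introduces the intermediate group $F:=HU$ and shows, via Proposition~\ref{prounidom}, that $Z:=F/H$ is $F$-\emph{dominated} by the same space with trivial $VU$-action, identified with a linear $S$-module $W'=\g u/(\g u\cap\g v)$. Proposition~\ref{proinddom} then propagates domination through induction: $G/H=G\times_F Z$ is $G$-dominated by $X_0=G\times_F Z_0\simeq L\times_{F'}W'$, an $L$-space for the reductive group $L$. At that point the already-proved reductive-group case \cite[Theorem~3.6]{BeKoII} applies, and domination (Lemma~\ref{lemtemdom}, Corollary~\ref{coralpcol}) transfers temperedness back. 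The case $W\ne 0$ is then handled not by Lemma~\ref{lemindtem} but by disintegration $L^2(G\times_H W)=\int_T^\oplus L^2(G/H_w)\,{\rm d}\nu(w)$ together with \cite[Lemma~3.9]{BeKoII} to verify the $\rho$-inequality on each generic stabilizer $\g h_w$. You would need to adopt this domination/induction-in-stages machinery (or invent a genuinely new mechanism) to handle the directions of $\g a_{\gs l}$ transverse to $\g a_{\gs s}$.
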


Recall that, when $W=0$, one has $\Pi=L^2(G/H)$.

\subsection{Domination of $G$-spaces}
\label{secdomspa}

\bq
The proof relies on the notion of domination of a $G$-action
that we have introduced in \cite{BeKoII}
without giving it a name.
\eq

Here is the definition. 
Let $G$ be a locally compact group.
Let  $X$ and $X_0$ be two locally compact spaces 
endowed with a continuous action of $G$,
and with a  $G$-invariant class of measures
${\rm vol}_X$ and ${\rm vol}_{X_0}$.
Let $\pi$ and $\pi_0$ be the unitary regular representations of $G$ 
in the Hilbert spaces of square-integrable half-densities 
$L^2(X)$ and $L^2(X_0)$.

\bd
[Domination of a $G$-space]
\label{defdomrep} We say that $X$ is 
{\it{$G$-dominated by $X_0$}}
if for every compactly supported bounded 
half-density $v$
on $X$,
there exists a compactly supported bounded 
half-density $v_0$ 
on $X_0$ such that, for all $g$ in $G$, 
\begin{equation}
\label{eqnpgvpgv}
|\langle\pi(g)v,v\rangle|
\leq
\langle\pi_0(g)v_0,v_0\rangle .
\end{equation}
\ed

\begin{Rem}
When both measures ${\rm vol}_X$ and ${\rm vol}_{X_0}$ are $G$-invariant,
the bound \eqref{eqnpgvpgv} means that, for every compact set $C\subset X$, 
there exists a constant $\la>0$ and a compact set $C_0\subset X_0$
such that, for all $g$ in $G$, 
$$
{\rm vol}(g\,C\cap C)
\leq
\la \,{\rm vol}(g\,C_0\cap C_0)
$$
\end{Rem}

This definition is very much related to our temperedness question because of the following lemma.

\begin{Lem}
\label{lemtemdom}
Let $G$ be a real algebraic reductive Lie group and
$P$ be a minimal parabolic subgroup of $G$,
and let  $X$ be a $G$-space.
The regular representation 	of $G$ in $L^2(X)$ is $G$-tempered
if and only if $X$ is $G$-dominated by the flag variety $X_0=G/P$.
\end{Lem}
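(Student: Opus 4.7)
The plan is to identify, in both directions, the temperedness condition with a bound on matrix coefficients by the Harish-Chandra function $\Xi$, using Proposition \ref{protemal2}, and then to realize such a bound as a matrix coefficient on the flag variety $G/P$. Two facts will be used throughout: first, since $P$ is amenable, the representation $\pi_0 = L^2(G/P)$ is $G$-tempered; second, since $P$ is a minimal parabolic, $G/P$ is compact, so every $L^\infty$ half-density on $G/P$ is automatically compactly supported. By construction, the normalized $K$-invariant half-density $v_{00}$ on $G/P$ satisfies $\Xi(g) = \langle \pi_0(g) v_{00}, v_{00}\rangle$.

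For the implication $(\Rightarrow)$, assume $L^2(X)$ is $G$-tempered. Given a compactly supported bounded half-density $v$ on $X$, a direct estimate as in the proof of Proposition \ref{proghw1} (combining $|\langle \pi(g)v,v\rangle| \leq \|v\|_\infty^2\,{\rm vol}(gC\cap C)$ with the volume bound obtained from Corollary \ref{coralpcol}) yields a constant $C_v > 0$ with $|\langle \pi(g) v, v\rangle| \leq C_v\,\Xi(g)$ for all $g \in G$. Setting $v_0 := \sqrt{C_v}\cdot v_{00}$, which is a compactly supported bounded half-density on $G/P$, one obtains $\langle \pi_0(g) v_0, v_0\rangle = C_v\,\Xi(g) \geq |\langle \pi(g)v,v\rangle|$, establishing the domination.

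For the converse $(\Leftarrow)$, assume $X$ is $G$-dominated by $G/P$. Given a compactly supported bounded half-density $v$ on $X$, pick the dominating half-density $v_0$ on $G/P$. Since $\pi_0$ is realized as $L^2$ with a positive cocycle, the pointwise majoration $|v_0| \leq \|v_0\|_\infty\, v_{00}\cdot{\rm vol}(G/P)^{1/2}$ yields $\langle \pi_0(g)v_0, v_0\rangle \leq C_{v_0}\,\Xi(g)$ for a constant depending only on $v_0$. Combined with the domination, this gives a bound $|\langle \pi(g)v, v\rangle| \leq C_{v_0}\,\Xi(g)$ valid for every compactly supported bounded half-density $v$. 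By Corollary \ref{coralpcol} (in the half-density formulation provided by Proposition \ref{protemal2}), this implies that $L^2(X)$ is $G$-tempered.

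The main technical point, and the only step that is not a bookkeeping exercise, is the reduction from a general compactly supported bounded $v_0$ on $G/P$ to the $K$-invariant spherical bound. Here one uses the positivity of the density cocycle for $\pi_0$ to replace $v_0$ by its pointwise absolute value, and then by its supremum times the $K$-invariant vector $v_{00}$; the bound on $\Xi$ then follows from the defining identity for $\Xi$. The compactness of $G/P$ is essential at this step, since it is what allows the $L^\infty$-norm to control the $L^2$-majoration globally.
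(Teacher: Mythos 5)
Your proof is correct and fills in exactly the content that the paper compresses into the single line ``this lemma is a direct consequence of Corollary~\ref{coralpcol}.'' Both directions rest on the same three facts you use: (a) the Herz/CHH characterisation of temperedness via the $\Xi$ bound, (b) the identity $\Xi(g)=\langle\pi_0(g)v_{00},v_{00}\rangle$ on the compact flag variety $G/P$, and (c) the positivity of the cocycle defining $\pi_0$, which lets you pass from an arbitrary compactly supported $v_0$ to the $K$-invariant spherical half-density. So this is the paper's intended argument, written out.

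One imprecision worth flagging, because it is glossed over in the last step of your ($\Leftarrow$) direction: what you actually obtain is $|\langle\pi(g)v,v\rangle|\le C_{v_0}\,\Xi(g)$ with a constant $C_{v_0}$ that in general exceeds $\|v\|^2\dim\langle Kv\rangle$ (equivalently, exceeds $\mathrm{vol}(C)$ in the form of Corollary~\ref{coralpcol}). Neither Proposition~\ref{protemal2} nor Corollary~\ref{coralpcol}, as stated, apply with such a loose constant, since both assert an ``iff'' with the sharp normalisation. The clean deduction is: the loose $\Xi$-bound on a dense set of vectors implies the matrix coefficients are in $L^{2+\epsilon}(G)$ for every $\epsilon>0$ (because $\Xi$ itself is), and then Cowling--Haagerup--Howe's almost-$L^2$ criterion \cite{CHH} gives temperedness; the sharp bound of Proposition~\ref{protemal2} then follows a posteriori. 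You should cite the almost-$L^2$ version of CHH at this point rather than Corollary~\ref{coralpcol}, whose sharp-constant form is not what you have. This is a presentation point rather than a mathematical gap, since the required input is precisely what the paper is citing from~\cite{CHH}.
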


\begin{proof}
This lemma is a direct consequence of 
Corollary \ref{coralpcol}.
\end{proof}

The following proposition gives us a nice situation
where an action is dominating another one.

\begin{Prop}
\label{prounidom}
Let $F=SU$ be a real algebraic Lie group which is a semidirect product of a reductive subgroup $S$ and 
its unipotent radical $U$. 
Let $H=SV$ be an algebraic subgroup of $F$ containing
$S$  where $V=U\cap H$. 
Let $Z$ be the $F$-space $Z=F/H=U/V$.
Let $Z_0:=Z$ 
endowed with another $F$-action 
where the $S$-action is the same but the $U$-action is trivial.
	
Then $Z$ is $F$-dominated by $Z_0$.
\end{Prop}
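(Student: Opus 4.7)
The plan is to prove the proposition by constructing an explicit dominating half-density $v_0$ on $Z_0$ for each compactly supported bounded $v$ on $Z$, and reducing the pointwise matrix coefficient inequality to a geometric volume estimate on $Z=U/V$.

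\textbf{Setup.} Fix a $U$-invariant Radon measure $d\mu$ on $Z$; since $S$ normalizes both $U$ and $V$, its conjugation action on $Z$ scales $d\mu$ by a character $\chi$ of $S$. The $F$-action on $Z$ factors: writing $f=sw\in SU=F$, one checks that $f\cdot[x]=(sws^{-1})\cdot_{U}(s\cdot_{S}[x])$, where $s\cdot_{S}[x]:=[sxs^{-1}]$ is the $S$-conjugation action (which fixes the basepoint $[e]\in Z$) and $\cdot_{U}$ denotes $U$-left-translation (which preserves $d\mu$). As $w$ varies over $U$ with $s$ fixed, the $U$-translation element $sws^{-1}$ ranges over all of $U$. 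Normalizing half-densities so that the regular representation is unitary, one computes
\[
\langle\pi(sw)v,v\rangle=\chi(s)^{-1/2}\int_{Z} v\bigl(u^{-1}\cdot_{U}(s^{-1}\cdot_{S}[x])\bigr)\,\overline{v([x])}\,d\mu,
\]
and similarly $\langle\pi_0(s)v_0,v_0\rangle=\chi(s)^{-1/2}\int_{Z} v_0(s^{-1}\cdot_{S}[x])\,\overline{v_0([x])}\,d\mu$, with $u=sws^{-1}$.

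\textbf{Reduction to a volume inequality.} Take $v$ supported on a compact $C\subset Z$ with $|v|\le\|v\|_\infty 1_{C}$, and set $v_0:=\|v\|_\infty 1_{C_0}$ for a suitable compact $C_0\supset C$. The triangle inequality then reduces the required domination $|\langle\pi(sw)v,v\rangle|\le\langle\pi_0(s)v_0,v_0\rangle$ to the geometric inequality
\[
\sup_{u\in U}\mathrm{vol}\bigl(C\cap u\cdot_{U}(s\cdot_{S} C)\bigr)\le\mathrm{vol}\bigl(C_0\cap s\cdot_{S} C_0\bigr)\quad\text{for all }s\in S.
\]
Since $U$ acts transitively on $Z$, the left side is bounded above by $\min(\mathrm{vol}(C),\chi(s)^{\pm 1}\mathrm{vol}(C))$. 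The construction of $C_0$ must supply the matching lower bound on the right side.

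\textbf{Construction of $C_0$ and the main obstacle.} Via the Cartan decomposition $S=KA^+K$, and after replacing $v$ and $v_0$ by $K$-averages (harmless by compactness of $K$), I would assume $K$-invariance and reduce to $s\in A^+$. Then take $C_0:=\overline{K\cdot A_0\cdot_{S}(U_1\cdot_{U} C)}$ for compact symmetric neighborhoods $A_0\subset A$ and $U_1\subset U$ of the identity, chosen large enough that $A_0$ absorbs the $A^+$-scaling of $C$ and $U_1$ absorbs the worst-case $U$-translations entering the supremum. Because the $S$-action on $Z$ fixes $[e]\in Z$, this radial thickening of $C$ around $[e]$ is compact, and one then verifies that $a\cdot_{S} C_0\cap C_0$ has volume comparable to $\min(1,\chi(a)^{\pm 1})\mathrm{vol}(C)$ for every $a\in A^+$. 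The main technical difficulty lies in this last verification: because $S$-orbits on $Z$ are generally unbounded, no $S$-invariant compact $C_0$ exists, and one must carefully balance the $U$-thickening $U_1$ against the $A$-action by exploiting the fixed point $[e]$ and tracking how $a\cdot_{S} U_1$ relates to $U_1$ uniformly in $a\in A^+$.
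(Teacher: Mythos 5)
Your reduction of the domination statement to a volume inequality of the form
\[
\sup_{u\in U}\mathrm{vol}\bigl(C\cap u\cdot_{U}(s\cdot_{S} C)\bigr)\ \le\ \lambda\,\mathrm{vol}\bigl(C_0\cap s\cdot_{S} C_0\bigr)
\qquad\text{for all }s\in S
\]
is correct, and the observation that, for $f=sw$, the action factors as a $U$-translation composed with the $S$-conjugation fixing the basepoint is the right normal form. However, the argument stops exactly where the proposition actually has content: you never prove the inequality, and the last paragraph explicitly defers it as a ``technical difficulty.'' This is not a technicality but the entire theorem. A priori, for each fixed $s$ the supremum on the left could be as large as $\min(\mathrm{vol}(C),\chi(s)^{-1}\mathrm{vol}(C))$, since $U$ acts transitively and preserves the measure; if that bound were actually attained (say for unimodular $s$ with eigenvalues $e^{t},e^{-t}$ going to infinity), the left side would stay comparable to $\mathrm{vol}(C)$ while the right side $\mathrm{vol}(C_0\cap s\cdot_S C_0)$ tends to $0$ for \emph{every} compact $C_0$, and no thickening $C_0=\overline{K A_0 (U_1\cdot_U C)}$ could rescue the inequality, since the issue is not the size of $C_0$ but its anisotropic decay under $s\cdot_S$. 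What must be proved --- and what your sketch does not attempt --- is the structural fact that the unipotent left-translations on $U/V$ cannot realign an $S$-stretched set back onto $C$, so that the supremum actually decays at the same anisotropic rate as the $S$-action on a fixed compact set. That is precisely the interaction between the linear $S$-action and the (nonlinear, polynomial) $U$-action on $U/V$ that makes the statement nontrivial; naming the obstacle is not the same as overcoming it.

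The paper itself does not prove the proposition here; it cites \cite[Corollary 4.6]{BeKoII}. Judging from the surrounding structure (in particular the role of Lemma \ref{lemuvw}, which gives an $S$-equivariant polynomial chart $\exp\colon\mathfrak m\xrightarrow{\ \sim\ } U/V$), the proof in that reference exploits the triangular polynomial form of the $U$-action on the linearized model and proceeds by an inductive/filtration argument on the nilpotency of $U$, rather than by a direct Cartan-decomposition-plus-thickening estimate. Your route is genuinely different in flavor, but as written it is a plan for a proof rather than a proof, and the missing volume estimate is where the real work would be.
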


\begin{proof}
This is \cite[Corollary 4.6]{BeKoII}.
\end{proof}

\subsection{Inducing a dominated action}

\bq 
The following proposition 
tells us that the induction of actions preserves the domination. 
\eq
\begin{Prop} 
\label{proinddom}
Let $G$ be a locally compact group, 
 and $F$ a closed subgroup of $G$. Let $Z$ and $Z_0$ 
be two locally compact $F$-spaces
with $G$-invariant class of measures.
Let $X:=G\times_F Z$ and $X_0:= G\times_F Z_0$
be the two induced $G$-spaces. 

If $Z$ is $F$-dominated by $Z_0$ then $X$ is $G$-dominated by $X_0$.
\end{Prop}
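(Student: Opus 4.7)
The plan is to reduce the $G$-domination of $X$ by $X_0$ to the $F$-domination of $Z$ by $Z_0$ by working directly with matrix coefficients of induced representations. First I would realize the induced representation $L^2(X) = L^2(G\times_F Z)$ as the Hilbert space of suitable $F$-equivariant $L^2(Z)$-valued sections on $G$, and introduce the family of \emph{elementary} induced vectors
\[
\Phi_{\chi,v}(g) := \int_F \chi(gf)\,\pi^Z(f)v\,df,
\]
where $\chi\in C_c(G)$, $\pi^Z$ denotes the $F$-action on $L^2(Z)$, and $v$ is a compactly supported bounded half-density on $Z$. These form a dense family of compactly supported bounded half-densities on $X$.

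Next, a standard unfolding of the integrals in the matrix coefficient of $\pi^X = \mathrm{Ind}_F^G \pi^Z$ on such an elementary vector yields a Mackey-type formula
\[
\langle \pi^X(g_0)\Phi_{\chi,v},\Phi_{\chi,v}\rangle = \int_G\int_F \overline{\chi(g)}\,\chi(g_0^{-1}gf)\,\langle \pi^Z(f)v,v\rangle\,df\,dg.
\]
By the hypothesis that $Z$ is $F$-dominated by $Z_0$, there exists a compactly supported bounded half-density $v_0$ on $Z_0$ with $|\langle \pi^Z(f)v,v\rangle| \le \langle \pi^{Z_0}(f)v_0,v_0\rangle$ uniformly for $f\in F$. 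Substituting this pointwise bound inside the integral while simultaneously replacing $\chi$ by $|\chi|$, and then reapplying the same formula in reverse on $X_0$, gives
\[
|\langle \pi^X(g_0)\Phi_{\chi,v},\Phi_{\chi,v}\rangle| \le \langle \pi^{X_0}(g_0)\Phi_{|\chi|,v_0},\Phi_{|\chi|,v_0}\rangle,
\]
which establishes the required domination for elementary vectors.

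Finally, to handle an arbitrary compactly supported bounded half-density $v'$ on $X$, I would invoke the pointwise positivity principle: if a nonnegative half-density $\Phi$ satisfies $|v'|\le \Phi$ on $X$, then the triangle inequality applied to the integral defining the matrix coefficient gives $|\langle \pi^X(g_0)v',v'\rangle|\le \langle \pi^X(g_0)\Phi,\Phi\rangle$. Thus it suffices to produce a nonnegative elementary vector $\Phi_{|\chi|,v}$ dominating $|v'|$ pointwise, and then combine with the previous step to obtain the dominating vector $\Phi_{|\chi|,v_0}$ on $X_0$. The main obstacle is precisely this pointwise domination: because $X = G\times_F Z$ is a twisted bundle, the support of $v'$ is compact in $X$ but becomes only \emph{compact modulo $F$} when lifted to $G\times Z$. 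One must therefore find compact sets $K\subset G$ and $C_Z\subset Z$ with $\mathrm{supp}(v') \subset K\cdot_F C_Z$, and then choose $\chi \in C_c(G)$ large on $K$ and $v$ large on $C_Z$ so that $\Phi_{|\chi|,v}\ge |v'|$ everywhere; this is possible thanks to properness of the natural projections from $\mathrm{supp}(v')\subset X$ to $G/F$ and to the fibers.
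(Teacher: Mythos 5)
Your approach is genuinely different from the paper's. The paper works directly with volumes $\mathrm{vol}_X(gC\cap C)$ after fixing a Borel measurable trivialization $G\simeq (G/F)\times F$ (chosen so as to carry relatively compact sets to relatively compact sets), which produces a measurable cocycle $\sigma_F\colon G\times G/F\to F$; the desired bound is then a short Fubini computation that applies the $F$-domination fiber by fiber. You instead argue on matrix coefficients of the induced representation via a Mackey-type formula for the elementary vectors $\Phi_{\chi,v}$. Your Mackey formula is correct, the insertion of the fiberwise bound under the integral sign (together with replacing $\chi$ by $|\chi|$) is correct, and the ``pointwise positivity principle'' is also correct. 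What your approach buys is conceptual clarity on the representation-theoretic side; what the paper's buys is brevity, since working with indicators of compact sets means the compactly-supported/bounded issues are automatic.

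The gap is exactly where you flag it, and it is more serious than your last paragraph suggests. To dominate an arbitrary compactly supported bounded half-density $v'$ on $X=G\times_F Z$ by a nonnegative elementary vector $\Phi_{|\chi|,v}$, you need not only $\mathrm{supp}(v')\subset K\cdot_F C_Z$ for compacts $K\subset G$, $C_Z\subset Z$, but also a lower bound on the ``smeared'' integral $\int_F|\chi(gf)|\,\pi^Z(f)v\,df$ on the $Z$-support of $v'(g)$, uniformly as $g$ ranges over a set of representatives for the relevant part of $G/F$. Making that uniform requires choosing a Borel measurable section $G/F\to G$ sending relatively compact sets to relatively compact sets, taking $\chi$ to be supported on a tube $KN$ with $N$ a compact neighborhood of the identity in $F$, and using a lower bound on the Radon--Nikodym factor on $N\times\mathrm{supp}(v)$. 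In other words, you are forced to reconstruct precisely the measurable trivialization that the paper's proof starts from, and on top of it you must also verify that $\Phi_{|\chi|,v}$ (and $\Phi_{|\chi|,v_0}$) are themselves compactly supported and bounded in the sense required by Definition \ref{defdomrep}. None of this is impossible, but as written the proposal asserts rather than proves the crucial step, and the properness remark does not by itself supply the needed uniformity.
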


\begin{proof}[Proof of Proposition \ref{proinddom}]
The proof is an adaptation of  \cite[Proposition 4.9]{BeKoII}
where $G$ was  an algebraic semisimple group.
We assume to simplify that 
the measures on $Z$ and $Z_0$ 
are $G$-invariant. 
This avoids to complicate the formulas with square roots
of Radon-Nikodym derivative.
The projection 
$$
G\rightarrow X':=G/F
$$
is a $G$-equivariant principal bundle with structure group $F$. 
	We fix
	a Borel measurable trivialization of this principal bundle 
	\begin{equation}
	\label{eqngsigh2}
	G\simeq X'\times F
	\end{equation}
	which sends relatively compact subsets to relatively compact subsets.
	The action of $G$ by left multiplication through this trivialization can be read as 
	\begin{equation*}
	\label{eqngxhgx2}
	g\,(x',f)=(gx',\si_F(g,x')f)
	\;\;\;\;\mbox{\rm 
		for all $g\in G$, $x'\in X'$ and $f\in F$,}
	\end{equation*}
	where $\si_F\colon G\times X'\rightarrow F$ 
	is a Borel  measurable cocycle. 
	This trivialization \eqref{eqngsigh2}
	induces a trivialization of the associated bundles
	\begin{eqnarray*}
		\label{eqngsigh3}
		X= G\times _F Z    
		&\simeq&  X' \times Z\, , \\    
		X_0 = G \times_F Z_0  
		&\simeq &                 
		X' \times Z_0\, .
	\end{eqnarray*}
	We start with a compact set $C$ of $X$.
	Through the first trivialization, 
	this compact set is included in a product 
	of two compact sets $C'\subset X'$ and $D\subset Z$
	\begin{equation}
	\label{eqncsumxd}
	C\subset C'\times D\; .
	\end{equation}
	Since $Z$ is $F$-dominated by $Z_0$ there exists $\la>0$ 
	and a compact subset 
	$D_0\subset Z_0$ such that, for all $f$ in $F$ ,
	$$
	{\rm vol}_Z(f\,D\cap D)
	\leq
	\la \,{\rm vol}_{Z_0}(f\,D_0\cap D_0)
	$$
	We compute, 
	for $g$ in $G$,
	\begin{eqnarray*}
		{\rm vol}_X(g\, C\cap C)
		&\leq&
		\int_{gC'\cap C'}{\rm vol}_Z(\si_F(g,g^{-1} x')D\cap D)\rmd x'\\
		&\leq &
		\la \int_{gC'\cap C'}{\rm vol}_{Z_0}(\si_F(g,g^{-1} x')D_0\cap D_0)\rmd x'\\
		&\leq &
		\la \,{\rm vol}_{X_0}(g\, C_0\cap C_0),
	\end{eqnarray*} 
	where $\rmd x'$ is a $G$-invariant measure on $X'$
	and $C_0$ is a compact subset of $X_0\simeq X'\times Z_0$
	which contains $C'\times D_0$.
\end{proof}

\subsection{The converse implication}
\label{secsemgro}

\bq
We conclude the proof 
of the converse implication in Theorem \ref{thmghw},
by reducing it to the case where $G$ is {\it{reductive}}
which was proven in \cite[Theorem 3.6]{BeKoII}
\eq

We will need the following  lemma
on the structure of nilpotent homoge\-neous spaces.
See \cite[Lemma 4.7]{BeKoII}, for a similar statement.
We recall that a unipotent Lie group is an algebraic nilpotent Lie group
with no torus factor.

\bl
\label{lemuvw}
Let $U$ be a real unipotent Lie group, $V$ a unipotent subgroup and
$\g v\subset\g u$ their Lie algebra.\\
{\rm{(1)}} There exists 
a real vector subspace $\g m\subset \g u$ such that 
$\g u=\g m\oplus \g v$ and the exponential map induces
a polynomial bijection ${\rm exp}\colon \g m \overset \sim \to U/V$.\\
{\rm{(2)}} Moreover, if $\g v$ is invariant by a reductive subgroup 
$S\subset {\rm Aut}(\g u)$, one can choose $\g m$
to be $S$-invariant. 
\el

\begin{proof}[Proof of Lemma \ref{lemuvw}]
We proceed by induction on $\dim U$. Let $Z$ be the center of $U$ and $\g z$
its Lie algebra.

{\bf First case : $\g z\cap \g v\neq \{0\}$.} 
In this case we apply the induction assumption to the Lie algebra 
$\g u':=\g u/(\g z\cap\g v)$ and its Lie subalgebra
$\g v':=\g v/(\g z\cap \g v)$.
This gives us 
an $S$-invariant subspace $\g m'$ of $\g u'$ such that 
$\g u'= \g m'\oplus\g v'$ and ${\rm exp}\colon \g m'\to U'/V'\simeq U/V$
is a bijection.
We denote by $\pi\colon \g u\to \g u'$ the projection and 
choose $\g m$ to be any $S$-invariant vector subspace
of 
$\pi^{-1}\mathfrak m'$ such that $\g m\oplus (\g z\cap\g v)=\pi^{-1}\g m'$. 

{\bf Second case : $\g z\cap \g v= \{0\}$.} 
In this case we apply the induction assumption to the Lie algebra 
$\g u':=\g u/\g z$ and its subalgebra
$\g v':=(\g v\oplus\g z)/\g z$.
This gives us 
an $S$-invariant subspace $\g m'$ of $\g u'$ such that 
$\g u'= \g m'\oplus\g v'$ and 
${\rm exp}\colon \g m'\to U'/V'$
is a bijection.
We denote by $\pi\colon \g u \to \g u'$ the projection and 
choose $\g m:=\pi^{-1}\g m'$. 
The identifications
$\g m' \simeq\g m/\g z$  and
$U'/V'\simeq U/VZ$ prove that
the exponential map ${\rm exp}\colon \g m\to U/V$ is bijective.
\end{proof}

\begin{proof}[Proof of Proposition \ref{proghw2}] 
We distinguish two cases.

{\bf First case : $W= \{0\}$.} In this case, one has  $\Pi=L^2(G/H)$.
We denote by $U$ and $V$ the unipotent radical of $G$ and $H$,
so that we have the equalities $G=LU$ and $H=SV$.
We have the inclusion $S\subset L$, but the group $V$ might not be included in $U$.
We introduce the unipotent group  $V':=VU\cap L$
and the algebraic groups $F:=HU$ and $F':=F\cap L$ 
so that we have the equality $F'=SV'$ and the inclusions
$$
H=SV\subset F=F'U\subset  G=LU\, .
$$
Let 
$$
Z:=F/H 
$$ 
and let $Z_0$ be the $F$-space $Z$ 
endowed with the same $S$-action but with a trivial $VU$-action.
One can easily describe $Z_0$. Indeed, let
$\g u$, $\g v$,... be the Lie algebras 
of $U$, $V$,... 
By Lemma \ref{lemuvw}, $Z_0$ can be identified with the $S$-module 
$W':=\g u/(\g u\cap \g v),$ 
 as is seen from the following isomorphisms:
$$
F/H\simeq VU/U\simeq U/(U\cap V)\simeq \g u/(\g u\cap \g v)\, .
$$
According to Proposition \ref{prounidom},
the $F$-space $Z$ is dominated by $Z_0$.
We intro\-duce now the two induced $G$-spaces
$$
X:=G\times_F Z=G/H
\;\;\;\mbox{\rm and}\;\;\;
X_0:= G\times_F Z_0\, .
$$
According to Proposition \ref{proinddom}, 
the $G$-space $X$ is dominated by  $X_0$.
Hence
$$
\mbox{\rm
the $L$-space 
$X=G/H$ is dominated by the $L$-space 
$X_0=L\times_{F'}W'$}
$$
By assumption one has 
\begin{equation*}
\label{eqnparsub1}
\rho_{\gs l}\leq 2\,\rho_{\gs g/\gs h}. 
\end{equation*}
Since 
$
\rho_{\gs g/\gs h}= \rho_{\gs g/\gs f}+\rho_{\gs f/\gs h}
=\rho_{\gs l/\gs f'}+\rho_{\gs u/(\gs u\cap\gs v)}\, ,
$
this can be rewritten as 
\begin{equation*}
\label{eqnparsub2}
\rho_{\gs l}\leq 2\,\rho_{\gs l/\gs f'}+2\, \rho_{W'}\, . 
\end{equation*}
Since $L$ is reductive, we can apply  \cite[Theorem 3.6]{BeKoII}. This tells us that 
the representation $L^2(L\times_{F'}W')$ is $L$-tempered.

Therefore since the $L$-space $X$ is $L$-dominated by $X_0$
the represen\-tation of $L$ in $L^2(G/H)$ 
is $L$-tempered, as required.
\vs

{\bf Second case : $W\neq  \{0\}$.} In this case, one has $\Pi=L^2(G\times_H W)$.
For $w$ in $W$, we denote by $H_w$ the stabilizer of $w$ in $H$.
We write $H_w=S_wU_w$ with $S_w$ reductive and $U_w$ the unipotent radical. 
Since the action of $H$ on $W$ is algebraic, there exists a Borel measurable subset 
$T\subset W$ which meets each of 
these $H$-orbits in exactly one point.
We can assume that for each $w$ in $T$, one has $S_w\subset S$. 
Let $\mu$ be a probability measure on $W$ with positive density and $\nu$ be the probability measure on $T\simeq S\backslash W$ given as the image of $\mu$.
One has an integral decomposition 
of the regular representation
\begin{equation}
\label{eqnregint0}
L^2(G\times_H W) =\int_T^{\oplus}L^2(G/H_w)\rmd\nu(w).
\end{equation}
Since the direct integral of tempered represen\-tations is tempered,
we only need to prove that, for $\nu$-almost all $w$ in $T$,
\begin{equation}
\label{eqnghvtem0}
L^2(G/H_w) 
\;\;\mbox{is $L$-tempered.}
\end{equation}
We can choose $w$ in the Zariski open set where dim $H_w$ is minimal.
According to \cite[Lemma 3.9]{BeKoII}, for such a $w$,
\begin{equation}
\label{eqnacttri}
\mbox{the action of $H_w$ on $W/(\g h\, w)$ is trivial.}
\end{equation}
Our assumption implies that one has the inequality on $\g s_w$
$$
\rho_\g l \leq 2\,\rho_{\g g/\g h}+2\,\rho_W\, .
$$
Thanks to \eqref{eqnacttri}, this can be rewritten as
$$
\rho_\g l \leq 2\,\rho_{\g g/\g h}+2\,\rho_{\g h/\g h_w}
\; =\; 
2\,\rho_{\g g/\g h_w}\, .
$$
Then the first case tells us that for such $w$,
the representation of $L$ in $L^2(G/H_w)$ is tempered.
This proves \eqref{eqnghvtem0} as required. 
\end{proof}

\subsection{Using parabolic subgroups}
\label{secparsub}

\bq
The aim of this section is to explain how,
when dealing with a quotient
$G/H$ of real algebraic groups, one can, using parabolic subgroups, 
reduce to the case where 
the unipotent radical $V$ of $H$
is included in the unipotent radical $U$ of $G$.
This reduction method will be used in Chapter \ref{seccomalg}
for complex Lie groups.
\eq

Let $G$ be a real algebraic  Lie group
and $H$ a real
algebraic subgroup of $G$.
We write $G=LU$ and $H=SV$ where $U$ and $V$ are the unipotent radicals of $G$ and $H$, and where $S$ and $L$ are reductive algebraic subgroups.
We can manage so that $S\subset L$ but we cannot always assume that 
$V$ is included in $U$. For instance this is not possible when $G$ 
is reductive and $H$ is not. 
We fix a parabolic subgroup $G_0$ of $G$ 
that contains $H$ and which is minimal with this property.
We denote by $U_0\supset U$ the unipotent radical of $G_0$. 
\begin{Lem}
	\label{lemparsub}
	One has the inclusion $V\subset U_0$.
	Moreover, we can choose a reductive subgroup $L_0\subset G_0$ such that 
	$G_0=L_0U_0$ and $S\subset L_0$.
\end{Lem}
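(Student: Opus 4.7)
The plan is to exploit the minimality of $G_0$ among parabolic subgroups of $G$ containing $H$ in order to control the image of $H$ modulo $U_0$, and then apply conjugacy of Levi subgroups to arrange $S\subset L_0$.

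First, fix \emph{any} Levi decomposition $G_0=L_0'U_0$ and let $\pi\colon G_0\twoheadrightarrow L_0'$ be the projection modulo $U_0$. Set $\bar H:=\pi(H)$. I would begin by observing that $\bar H$ is contained in no proper parabolic subgroup of $L_0'$: if $Q\subsetneq L_0'$ were a parabolic containing $\bar H$, then $\pi^{-1}(Q)$ would be a parabolic subgroup of $G$ (parabolics of $L_0'$ pull back along $\pi$ to parabolic subgroups of $G$ sitting inside $G_0$), strictly smaller than $G_0$ and containing $H$, contradicting the minimality assumption on $G_0$.

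Second, I would invoke the Borel--Tits theorem: every unipotent subgroup of a reductive algebraic group is contained in the unipotent radical of some parabolic subgroup, whose points contain the normalizer of that unipotent subgroup. Applied to the unipotent radical $R_u(\bar H)$ inside $L_0'$, this yields a parabolic $P\subset L_0'$ with $R_u(\bar H)\subset R_u(P)$ and $\bar H\subset N_{L_0'}(R_u(\bar H))\subset P$. If $R_u(\bar H)$ were nontrivial then $R_u(P)$ would be nontrivial too, so $P$ would be a \emph{proper} parabolic of $L_0'$ containing $\bar H$, contradicting the previous paragraph. Hence $R_u(\bar H)=\{e\}$, so $\bar H$ is reductive. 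The image $\bar V:=\pi(V)$ is unipotent (being the image of a unipotent group) and normal in $\bar H$ (since $V\triangleleft H$); a normal unipotent subgroup of a reductive group is trivial, so $\bar V=\{e\}$, that is, $V\subset\ker\pi=U_0$.

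For the second assertion, $S$ is a reductive algebraic subgroup of $G_0$. By the standard conjugacy theorem for Levi decompositions of real algebraic groups (Mostow's theorem), any reductive subgroup of $G_0$ is contained in some Levi complement, and any two Levi complements of $G_0$ are conjugate by an element of $U_0$. Consequently there exists $u\in U_0$ with $uSu^{-1}\subset L_0'$, and $L_0:=u^{-1}L_0'u$ is then a Levi subgroup of $G_0$ satisfying $G_0=L_0U_0$ and $S\subset L_0$, as required.

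I expect the main technical point to be invoking Borel--Tits and Mostow's conjugacy theorem correctly for the possibly disconnected real algebraic subgroups at hand; but both are known in this generality. Once that is granted, the conceptual core of the proof is very short: minimality of $G_0$ forces $\bar H$ to be trapped in no proper parabolic, hence reductive, forcing $V\subset U_0$.
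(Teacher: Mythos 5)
Your proof is correct and follows essentially the same route as the paper's. The paper considers $V_0 := H\cap U_0$ and shows that $H/V_0$, viewed inside the reductive group $G_0/U_0$, is contained in no proper parabolic by minimality of $G_0$, then cites Bourbaki (Lie, Ch.~VIII, \S10) to conclude it is reductive, so $V_0 = V$; you arrive at the same conclusion by naming the Borel--Tits theorem and then observing that the unipotent normal image $\pi(V)$ must die in the reductive quotient. The final step, conjugating a Levi of $G_0$ by an element of $U_0$ so as to contain $S$, is the same Mostow-type conjugacy argument in both proofs.
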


\begin{proof}
The group $V_0:=U_0\cap H$ is a unipotent normal subgroup of $H$. 
The quotient $S':=H/V_0$ is an algebraic 
subgroup of
the reductive group $G_0/U_0$ which is not contained in any proper
parabolic subgroup of $G_0/U_0$. 
Therefore, by \cite[Sec.~VIII.10]{Bou7a9}
this group $S'$ is reductive and  the group $V_0$ is the unipotent radical $V$ of $H$. This proves the inclusion $V\subset U_0$.
	
	Since maximal reductive subgroups $L_0$ of $G_0$ are $U_0$-conjugate,
	one can choose $L_0$ containing $S$.
\end{proof}

We introduce 
the $L_0$-module $W_0:=\g u_0/\g v$.
The following two lemmas will be useful 
in our induction process.

\begin{Prop}
\label{proredpar}
Keep this notation. 
The following are equivalent:\\
$(i)$ \;\;$L^2(G/H)$ is $L$-tempered;\\
$(ii)$ \; $\rho_\gs l\leq 2\,\rho_{\gs g/\gs h}$ as a function on $\g s$;\\
$(iii)$ $L^2(G_0/H)$ is $L_0$-tempered;\\
$(iv)$ \;$\rho_{\gs l_0}\leq 2\,\rho_{\gs g_0/\gs h}$ as a function on $\g s$;\\
$(v)$\;\; $L^2(L_0\times _SW_0)$  is $L_0$-tempered.  
\end{Prop}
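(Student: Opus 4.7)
The plan is to derive the five equivalences from three applications of Theorem \ref{thmghw} (once to $(G,H,W{=}0)$, once to $(G_0,H,W{=}0)$, and once to $(L_0,S,W{=}W_0)$), together with one elementary comparison of $\rho$-functions on $\g s$ coming from the Levi structure of the parabolic $G_0\subset G$.

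First I would treat $(i)\Leftrightarrow(ii)$ and $(iii)\Leftrightarrow(iv)$ at once. Applying Theorem \ref{thmghw} to $(G,H,W{=}0)$, using that $L\supset S$ is a maximal reductive subgroup of $G\supset H$ by hypothesis, gives $(i)\Leftrightarrow(ii)$. By Lemma \ref{lemparsub}, $L_0\supset S$ is likewise a maximal reductive subgroup of $G_0\supset H$, so Theorem \ref{thmghw} applied to $(G_0,H,W{=}0)$ gives $(iii)\Leftrightarrow(iv)$. Next, applying Theorem \ref{thmghw} to $(L_0,S,W{=}W_0)$ (with $L_0$ already reductive) yields that $(v)$ is equivalent to $\rho_{\g l_0}\leq 2\rho_{\g l_0/\g s}+2\rho_{W_0}$ on $\g s$; since $\g g_0=\g l_0\oplus \g u_0$ and $\g h=\g s\oplus \g v$ give the $\g s$-module decomposition $\g g_0/\g h \simeq \g l_0/\g s \oplus W_0$, one has $\rho_{\g g_0/\g h}=\rho_{\g l_0/\g s}+\rho_{W_0}$ on $\g s$, so this inequality is precisely $(iv)$.

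It remains to prove the equivalence $(ii)\Leftrightarrow(iv)$, which is a purely algebraic identity. From $L_0\subset L$ and $U\subset U_0$ one obtains that $L_1:=L\cap G_0$ is a parabolic subgroup of $L$ with Levi $L_0$ and unipotent radical $U_1\subset L$, so that $\g u_0=\g u_1\oplus \g u$ as $\g l_0$-modules; choosing $\g u_1^-$ opposite to $\g u_1$ in $\g l$ gives the standard Levi decomposition $\g l=\g u_1^-\oplus \g l_0 \oplus \g u_1$, and the $\g l_0$-module isomorphism $\g g/\g g_0 \simeq \g u_1^-$. Since $\g u_1^-$ and $\g u_1$ are dual $\g l_0$-modules, one has $\rho_{\g u_1^-}=\rho_{\g u_1}$ on $\g l_0$, and for $Y\in\g s$ one computes
\[
\rho_{\g l}(Y)=\rho_{\g l_0}(Y)+2\rho_{\g u_1}(Y), \qquad \rho_{\g g/\g h}(Y)=\rho_{\g g_0/\g h}(Y)+\rho_{\g u_1}(Y).
\]
The terms $2\rho_{\g u_1}(Y)$ then cancel between the two sides of $(ii)$, reducing it to $(iv)$.

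The main obstacle I anticipate is purely structural bookkeeping: one must track three nested Levi decompositions $\g g=\g l\oplus \g u$, $\g g_0=\g l_0\oplus \g u_0$, $\g l=\g u_1^-\oplus \g l_0 \oplus \g u_1$ and check that the identifications $\g u_0=\g u_1\oplus \g u$ and $\g g/\g g_0\simeq \g u_1^-$ hold as $\g l_0$- (hence $\g s$-) modules. Once these identifications are in place, no additional unitary-representation input beyond the three invocations of Theorem \ref{thmghw} is needed, and the remaining equivalences follow by straightforward substitution.
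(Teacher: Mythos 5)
Your proof is correct and follows essentially the same route as the paper's: $(i)\Leftrightarrow(ii)$ and $(iii)\Leftrightarrow(iv)$ via Theorem \ref{thmghw} (in the form of Theorem \ref{thmgh}), $(iv)\Leftrightarrow(v)$ via Theorem \ref{thmghw} for $(L_0,S,W_0)$ together with $\rho_{\gs g_0/\gs h}=\rho_{\gs l_0/\gs s}+\rho_{W_0}$, and $(ii)\Leftrightarrow(iv)$ via the decomposition $\g u_0=\g u'_0\oplus\g u$ with $\g u'_0=\g u_0\cap\g l$ (your $\g u_1$) and the resulting identities on $\rho$. The only difference is cosmetic: the paper phrases the $(ii)\Leftrightarrow(iv)$ cancellation through $\rho_{\gs g}=\rho_{\gs g_0}+\rho_{\gs u'_0}$ while you state the equivalent identity $\rho_{\g g/\g h}=\rho_{\g g_0/\g h}+\rho_{\g u_1}$ directly, and you spell out the intermediate parabolic $L_1=L\cap G_0$ rather than working with $\g u'_0$ alone.
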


\begin{proof}[Proof of Proposition \ref{proredpar}]
$(i)\Leftrightarrow (ii)$ and $(iii)\Leftrightarrow (iv)$.  
This is Theorem \ref{thmgh}.

$(ii)\Leftrightarrow (iv)$ Write $\g u_0=\g u'_0\oplus \g u$ 
where $\g u'_0:=\g u_0\cap\g l$.
The equivalence follows from the equalities
$\rho_\gs l=\rho_{\gs l_0}+2\, \rho_{\gs u'_0}$ and
$\rho_\gs g=\rho_{\gs g_0}+ \rho_{\gs u'_0}$ .

$(iv)\Leftrightarrow (v)$ 
This follows from Theorem \ref{thmghw} if one notices the equality
$\rho_{\gs g_0/\gs h}=\rho_{\gs l_0/\gs s}+\rho_{W_0}$.
\end{proof}

The following lemma will  also be useful in this reduction process.

\begin{Lem}
\label{lemredpar}
Keep this notation.  
The following are equivalent:
\\
{\rm{(i)}} the orbit closure $\ol{{\rm Ad G}\,\g h}$
contains a solvable Lie algebra;\\
{\rm{(ii)}} the orbit closure $\ol{{\rm Ad G_0}\,\g h}$
contains a solvable Lie algebra.
\end{Lem}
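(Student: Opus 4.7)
The implication (ii) $\Longrightarrow$ (i) will be immediate, since the inclusion $G_0\subset G$ gives $\operatorname{Ad} G_0\,\g h\subset \operatorname{Ad} G\,\g h$ and hence the same inclusion of closures; no solvable-specific reasoning is required here.

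For (i) $\Longrightarrow$ (ii) the plan is to exploit the fact that $G_0$ is a parabolic subgroup of $G$. Parabolic subgroups are cocompact, so $G/G_0$ is compact, and any maximal compact subgroup $K$ of $G$ acts transitively on it, yielding the Iwasawa-type factorization
\[
G \;=\; K\cdot G_0.
\]
When $G$ is not reductive, the inclusion $U\subset G_0$ (recorded just before Lemma \ref{lemparsub}) reduces this to the classical statement $L = K\cdot(L\cap G_0)$ for the reductive Levi $L$ of $G$ and one of its parabolics.

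Given (i), I would pick a sequence $(g_n)_{n\geq 1}$ in $G$ with $\operatorname{Ad}(g_n)\,\g h$ converging to a solvable subalgebra $\g r$ of $\g g$. Using the factorization above, write $g_n = k_n\,q_n$ with $k_n\in K$ and $q_n\in G_0$. Compactness of $K$ allows one to pass to a subsequence along which $k_n\to k_\infty$ in $K$. Then
\[
\operatorname{Ad}(q_n)\,\g h \;=\; \operatorname{Ad}(k_n^{-1})\operatorname{Ad}(g_n)\,\g h \;\longrightarrow\; \operatorname{Ad}(k_\infty^{-1})\,\g r,
\]
and $\operatorname{Ad}(k_\infty^{-1})\,\g r$ remains solvable, being $G$-conjugate to $\g r$. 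This exhibits a solvable subalgebra inside $\overline{\operatorname{Ad} G_0\,\g h}$, proving (ii).

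I do not expect a serious obstacle: the argument is a short Iwasawa-style trick that trades an arbitrary deforming sequence in $G$ for one living entirely in $G_0$, at the cost of conjugating the eventual limit by an element of the compact group $K$. The only mildly delicate point is justifying $G=K\,G_0$ when $G$ is not semisimple, which is taken care of by the inclusion $U\subset G_0$.
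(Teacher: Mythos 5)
Your proposal is correct and follows the same route as the paper, whose entire proof is the single remark ``this follows from the compactness of $G/G_0$.'' You have simply unwound that remark by choosing a compact section $K$ with $G=KG_0$ (any compact $C$ with $G=CG_0$ would do) and passing to a convergent subsequence of the compact factors, which is exactly the intended use of the compactness of $G/G_0$.
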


\begin{proof}[Proof of Lemma \ref{lemredpar}]
This follows from the compactness of $G/G_0$.
\end{proof}

\section{Complex algebraic homogeneous spaces}
\label{seccomalg}

The aim of this chapter is to prove the last remaining impli\-cation 
in Theorem \ref{thmequcon} which is the converse of Proposition 
\ref{prorhosla}. 
We keep the notation of the previous Chapters 
\ref{secreaalg} and \ref{secdomuni}. We assume  in this chapter that both $G$ and $H$ are complex algebraic Lie group, but do not assume $G$ to be semisimple.

\subsection{The equivalence for $G$ algebraic}
\label{secslarho}

We first state the  
extension of Theorem \ref{thmequcon},
which relates temperedness to the existence of 
solvable limit algebras for a general algebraic group $G$.
This extension will be useful because of the 
induction process in the proof.
We still use 
the notation in Section \ref{secnotrea}.

\begin{Thm} 
	\label{thmtemrhosla}
	Let $G$ be a complex algebraic Lie group and
	$H$ be a complex algebraic  subgroup.
	Then one has the equivalences,
	$$Tem(\g g, \g h) \Longleftrightarrow Rho(\g g, \g h)\Longleftrightarrow Sla(\g g, \g h).$$
\end{Thm}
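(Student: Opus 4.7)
The plan is to prove the equivalence $Rho(\g g,\g h)\Longleftrightarrow Sla(\g g,\g h)$ for complex algebraic $G$, since $Tem\Longleftrightarrow Rho$ is already Theorem \ref{thmgh}. I would proceed by induction on $\dim_{\mathbb{C}}\g g$, with the parabolic machinery of Section \ref{secparsub} as the key reduction tool.

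First, let $G_0\subseteq G$ be the minimal parabolic subgroup of $G$ containing $H$. By Lemma \ref{lemredpar}, $Sla(\g g,\g h)\Longleftrightarrow Sla(\g g_0,\g h)$; by Proposition \ref{proredpar}, $Rho(\g g,\g h)\Longleftrightarrow Rho(\g g_0,\g h)$. When $G_0\subsetneq G$, the inductive hypothesis applied to $(G_0,H)$ closes the argument. So the remaining case is $G_0=G$, i.e., $H$ lies in no proper parabolic of $G$; by Lemma \ref{lemparsub} this forces $V\subset U$, and we may arrange $S\subset L$.

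In this final configuration Proposition \ref{proredpar}(v) recasts $Rho(\g g,\g h)$ as the $L$-temperedness of the induced representation $L^2(L\times_S W_0)$, where $W_0:=\g u/\g v$ is an $S$-module; by Theorem \ref{thmghw} this is the inequality
\begin{equation*}
\rho_{\gs l}\leq 2\rho_{\gs l/\gs s}+2\rho_{W_0}\quad\text{on }\g s.
\end{equation*}
Since $L$ is reductive and $S\subset L$ is reductive, we are essentially in a semisimple situation where the complex reductive case of Theorem \ref{thmequcon}, namely Proposition \ref{proequcon}, applies to the pair $(\g l,\g s)$, modulo the twist by $W_0$ accounting for the unipotent contribution.

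The heart of the argument is then to identify $Sla(\g g,\g h)$ directly with this $\rho$-inequality. For the $\Longleftarrow$ direction, mimicking the proof of Proposition \ref{proslatmu}, one selects a dominant $Y\in\g a_{\gs s}$ so that the contraction $\mathrm{Ad}(e^{-nY})\g h$ converges to a Lie subalgebra sitting inside a Borel subalgebra of $\g g$ compatible with the Levi decomposition $\g g=\g l\oplus\g u$; the $\rho$-inequality is exactly what forces this containment, making the limit solvable. For the $\Longrightarrow$ direction, a solvable limit in $\overline{\mathrm{Ad}\,G\cdot\g h}$ projects modulo $\g u$ to a solvable limit of $\mathrm{Ad}(L)\cdot\g s$, and Proposition \ref{proequcon} applied to $(\g l,\g s)$ yields the inequality on $\g l/\g s$, while a weight analysis of $\g v\subset\g u$ supplies the $\rho_{W_0}$ contribution. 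The main obstacle will be the bookkeeping: aligning the Borel subalgebra of $\g g$ with the Levi decomposition $\g g=\g l\oplus\g u$ and carefully separating the contributions of $\g l/\g s$ and $W_0$ to the $\rho$-bounds, while first reducing via Lemma \ref{lemhhh} to $\g h=[\g h,\g h]$ so that the semisimple Levi of $\g h$ is well-controlled.
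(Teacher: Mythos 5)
Your overall induction scheme — reduce to $G_0$ by Proposition \ref{proredpar} and Lemma \ref{lemredpar}, then handle the case $G_0=G$ where $V\subset U$ and $S\subset L$ — is a reasonable organization of the reduction, and is consistent with the paper's use of the parabolic $G_0$. But the ``heart of the argument'' paragraph, where you try to make both directions of $Rho(\g g,\g h)\Leftrightarrow Sla(\g g,\g h)$ purely algebraic in the final configuration, contains a genuine gap that the paper's Section \ref{secindrho} is specifically designed to overcome.

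The gap is in the $Sla\Rightarrow Rho$ direction. You propose to ``project modulo $\g u$'' the solvable limit of $\mathrm{Ad}\,G\,\g h$ to obtain a solvable limit of $\mathrm{Ad}\,L\,\g s$, i.e.\ to deduce $Sla(\g l,\g s)$. This fails because passing to the quotient $\g g\to\g g/\g u$ does not commute with taking limits in the Grassmannian: if $\mathrm{Ad}(g_n)\,\g h\to\g r$ with $g_n=\ell_n u_n$, the corresponding limit of $\mathrm{Ad}(\ell_n)\,\g s$ in $\mathrm{Gr}(\dim\g s,\g l)$ strictly contains the image of $\g r$ in $\g l$ whenever $\g r\cap\g u$ is larger than $\g v$, and it need not be solvable. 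A concrete counterexample: take $\g g=\g{sl}_2(\m C)\ltimes\m C^2$ (standard representation), $\g h=\g s=\g{sl}_2(\m C)$, $\g v=0$. Contracting by $g_n=e^{nv_0}$ for a fixed nonzero $v_0\in\m C^2$ gives $\mathrm{Ad}(g_n)\,\g{sl}_2\to\m C N\oplus\m C^2$ (where $N$ spans the nilpotent line annihilating $v_0$), which is solvable, so $Sla(\g g,\g h)$ holds. But $\ell_n=e$, so the ``projected'' limit is $\g{sl}_2$ itself, and $Sla(\g l,\g s)=Sla(\g{sl}_2,\g{sl}_2)$ is false. Had your projection step been valid, Proposition \ref{proequcon} would give $\rho_{\gs l}\le 2\rho_{\gs l/\gs s}=0$ on $\g s$, which is absurd. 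In this example $Rho(\g g,\g h)$ does hold, but only because of the $\rho_{W_0}$ term which your projected argument would have discarded.

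What is actually needed — and what the paper supplies with Lemma \ref{lemdownsla} — is to push $Sla(\g g_0,\g h)$ down not to $(\g l_0,\g s)$, but to $(\g l_0,\g s_w)$ for the stabilizer $\g s_w$ of a Zariski-generic $w\in W_0=\g u_0/\g v$. In the example above, $\g s_w=\m C N$ is a strictly smaller (nilpotent) subalgebra than $\g s$, and $Sla(\g l_0,\g s_w)$ is trivially true. Having this family of smaller subalgebras indexed by $w$ is exactly what makes the subsequent disintegration
\[
L^2(L_0\times_S W_0)=\int^{\oplus}_{W_0}L^2(L_0/S_w)\,\rmd\nu(w)
\]
work: one applies the induction hypothesis to each $(\g l_0,\g s_w)$, deduces $L_0$-temperedness of $L^2(L_0/S_w)$ for generic $w$, and hence of the integral, and then Proposition \ref{proredpar}(v) converts this back to $Rho(\g g,\g h)$. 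This analytic detour through temperedness and disintegration is not a convenience — the authors point out explicitly that they know no direct algebraic argument, and your example shows why the naive algebraic projection cannot replace it. Your $Rho\Rightarrow Sla$ direction is also sketchier than necessary (the paper's route is via closedness of $\mc L_{rho}$ and the characterization of solvability on closed orbits, Lemma \ref{lemrhosla2}, rather than a contraction argument, and ``the $\rho$-inequality forces the containment'' in a Borel conflates $Rho$ with $Tmu$), but the decisive missing ingredient is Lemma \ref{lemdownsla} together with the disintegration argument.
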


\begin{proof}[Proof of Theorem \ref{thmtemrhosla}]
The first equivalence follows from Theorem \ref{thmgh}.
We split the proof of the second equivalence 
into  Propositions \ref{prorhosla2}
and \ref{proslarho2}.
\end{proof}

\begin{Cor}
	\label{corghghgh}
	Let $G$ be a complex algebraic Lie group,
	$H$ be a  complex algebraic  subgroup,
	and $\g h'\in \ol{{\rm Ad}G\,\g h}$.
	Then one has the equivalence,
	$$Sla(\g g, \g h) \;\;\Longleftrightarrow Sla(\g g, \g h').
	$$
\end{Cor}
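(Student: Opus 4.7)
The backward implication $Sla(\g g,\g h')\Rightarrow Sla(\g g,\g h)$ is immediate: from $\g h'\in \ol{{\rm Ad}G\,\g h}$ one has the inclusion $\ol{{\rm Ad}G\,\g h'}\subset \ol{{\rm Ad}G\,\g h}$, so any solvable Lie subalgebra witnessing $Sla(\g g,\g h')$ in the smaller orbit closure automatically witnesses $Sla(\g g,\g h)$ in the larger one.

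For the forward implication $Sla(\g g,\g h)\Rightarrow Sla(\g g,\g h')$, my plan is to route through the condition $Rho$ by invoking Theorem \ref{thmtemrhosla}, which has just been established. First, I would deduce $Rho(\g g,\g h)$ from $Sla(\g g,\g h)$ by that theorem. Next, since the condition $Rho$ is patently ${\rm Ad}G$-invariant, the entire orbit ${\rm Ad}G\,\g h$ is contained in the set
$$
\mc L_{rho}^{\rm alg}:=\{\g k\in \mc L\mid Rho(\g g,\g k)\text{ holds}\}.
$$
If I can show that $\mc L_{rho}^{\rm alg}$ is closed in $\mc L$, then $\g h'\in \ol{{\rm Ad}G\,\g h}\subset \mc L_{rho}^{\rm alg}$, which yields $Rho(\g g,\g h')$, and a second application of Theorem \ref{thmtemrhosla} will complete the proof by giving $Sla(\g g,\g h')$.

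The hard part will be establishing that $\mc L_{rho}^{\rm alg}$ is closed in $\mc L$; this is the algebraic analogue of Lemma \ref{lemrhosla}(i) from the semisimple case. The subtlety is that in the algebraic definition, the inequality $\rho_{\gs l}\le 2\,\rho_{\gs g/\gs h}$ is tested only on the maximal reductive subalgebra $\g s\subset \g h$, and the Levi factor $\g s$ does not vary continuously with $\g h\in \mc L$. My plan is to bypass this by reformulating $Rho(\g g,\g h)$ as the condition $\rho_{\gs l}(Y)\le 2\,\rho_{\gs g/\gs h}(Y)$ for every ad-semisimple $Y\in \g h$; this reformulation is equivalent because $\rho_V(Y)=\rho_V(Y_{ss})$ for any algebraic representation $V$ (the nilpotent Jordan part contributes no eigenvalues), because algebraicity of $\g h$ guarantees $Y_{ss}\in \g h$, and because every ad-semisimple element of $\g h$ is $H$-conjugate to an element of $\g s$ while both $\rho_{\gs l}$ and $\rho_{\gs g/\gs h}$ are ${\rm Ad}H$-invariant. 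In this reformulation the condition references only elements of $\g h$ itself: given $\g h_n\to \g h_\infty$ in $\mc L_{rho}^{\rm alg}$ and an ad-semisimple $Y_\infty\in \g h_\infty$, one approximates $Y_\infty$ by $Y_n\in \g h_n$, replaces each $Y_n$ by its ad-semisimple part $Y_{n,ss}\in \g h_n$ (noting $Y_{n,ss}\to Y_\infty$ by continuity of Jordan decomposition in algebraic Lie algebras), and passes to the limit using continuity of $\rho_{\gs l}$ and of $(\g k,Y)\mapsto \rho_{\gs g/\gs k}(Y)$ on the tautological set $\{(\g k,Y):\g k\in \mc L,\ Y\in \g k\}$.
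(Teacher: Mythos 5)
Your route is the same as the paper's: reduce to the $Rho$ condition (your Theorem~\ref{thmtemrhosla} is itself deduced from Propositions~\ref{prorhosla2} and~\ref{proslarho2}, which are precisely what the paper's one-line proof cites), then show that the $Rho$-locus $\mc L_{rho}$ is closed in $\mc L$. The closedness is Lemma~\ref{lemrhosla2}(i), which the paper dismisses with the remark ``same as for Lemma~\ref{lemrhosla}.'' You correctly flag the subtlety that this remark hides: in the algebraic setting the inequality is tested on a Levi $\g s\subset\g h$, and $\g s$ does not vary continuously with $\g h$. Your Jordan-decomposition reformulation is the right way around this, with one point you should make explicit: $\rho_{\gs l}(Y)$ needs a meaning for $Y\in\g h\setminus\g l$, e.g.\ by identifying $\rho_{\gs l}$ with $\rho_{\gs g/\gs u}$ (they agree on $\g l$ since $\g u$ is an ideal and $\g g/\g u\cong\g l$ as $\g l$-modules), which is ${\rm Ad}G$-invariant and defined on all of $\g g$; this is what justifies the ${\rm Ad}H$-invariance you invoke.

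The one concrete error is the parenthetical ``noting $Y_{n,ss}\to Y_\infty$ by continuity of Jordan decomposition in algebraic Lie algebras.'' The map $Y\mapsto Y_{ss}$ is not continuous: in $\mathfrak{gl}_2(\m C)$ the semisimple matrices $\bigl(\begin{smallmatrix}0&1\\0&1/n\end{smallmatrix}\bigr)$ converge to the nonzero nilpotent $\bigl(\begin{smallmatrix}0&1\\0&0\end{smallmatrix}\bigr)$ while their semisimple parts do not converge to its semisimple part $0$. Whether the convergence you want holds when the limit is assumed semisimple is a delicate question that should not be waved away. Fortunately the step is unnecessary. Your reformulation already shows that for an \emph{algebraic} $\g h_n$ the inequality $\rho_{\gs g/\gs u}(Y)\le 2\rho_{\gs g/\gs h_n}(Y)$ holds for \emph{all} $Y\in\g h_n$, not only ad-semisimple ones: both sides satisfy $\rho_V(Y)=\rho_V(Y_{ss})$ and $Y_{ss}\in\g h_n$, so the general case reduces to the ad-semisimple one. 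One may therefore pass to the limit directly along the arbitrary approximating sequence $Y_n\to Y_\infty$ in the tautological set $\{(\g k,Y)\mid Y\in\g k\}$, never taking semisimple parts at finite $n$. With that repair your argument is correct and usefully fills in the detail the paper's one-line proof of Lemma~\ref{lemrhosla2}(i) omits.
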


This equivalence  says that if a Lie subalgebra 
admits one solvable limit, then all its limit
Lie algebras also admit a solvable limit.

\begin{proof}[Proof of Corollary \ref{corghghgh}]
More precisely it is a corollary of Propositions \ref{prorhosla2}
and \ref{proslarho2}. 
Indeed, if $\g h$ satisfies $Sla(\g g,\g h)$, then by Proposition 
\ref{proslarho2}, it satisfies $Rho(\g g,\g h)$. 
Then by Proposition \ref{prorhosla2}, all limit subalgebras 
$\g h'\in \ol{{\rm Ad}G\,\g h}$ also satisfy $Sla(\g g,\g h')$.
\end{proof}

\begin{Rem}
\label{remslaclo}
The set of Lie subalgebras
$\g h$ in $\g g$ satisfying $Sla(\g g, \g h)$ is closed. 
Indeed, this follows from the $Rho$-condition in Theorem \ref{thmtemrhosla}.
\end{Rem}

\subsection{Rho and Sla}
\label{secrhosla2}
\bq
We extend Proposition \ref{prorhosla} to general
algebraic groups $G$.
\eq

\bp
\label{prorhosla2}
Let $\g g$ be an algebraic complex Lie algebra and $\g h\subset \g g$
be a complex Lie subalgebra. Then, one has the implication
\begin{eqnarray*}
Rho(\g g,\g h)
	&\Longrightarrow&
	Sla(\g g, \g h)\, .
\end{eqnarray*}
More precisely, if $\g h$ satisfies $Rho(\g g,\g h)$, then 
every Lie algebra $\g h'$ in $\ol{{\rm Ad}G\g h}$ 
satisfies $Sla(\g g, \g h)$.
\ep

\begin{Rem} In Propositions \ref{prorhosla2}
and \ref{proslarho2},
the assumption that $\g g$ is algebraic, 
i.e. is the Lie algebra of a complex algebraic Lie group
can easily be removed. We will not need it.
\end{Rem}

\begin{proof}[Proof of Proposition \ref{prorhosla2}]
This follows from Lemma \ref{lemrhosla2} below
and from the fact that the orbit closure always contains a closed $G$-orbit.
\end{proof}

We denote again by $\mc L_{rho}$ the set of Lie subalgebras 
$\g h$ of $\g g$ that satisfy $Rho(\g g,\g h)$.

\bl
\label{lemrhosla2}
Let $\g g$ be an algebraic complex Lie algebra. Then, \\
$(i)$ $\mc L_{rho}$ is closed in $\mc L$.\\
$(ii)$ Let $\g h\subset \g g$
be a complex Lie subalgebra with ${\rm Ad}G\,\g h$ closed.
Then, 
\begin{eqnarray*}
\mbox{$\g h$ is solvable} 
&\Longleftrightarrow&
Rho(\g g, \g h)\, .
\end{eqnarray*}
\el

\begin{proof}[Proof of Lemma \ref{lemrhosla2}] This is a straightforward extension of Lemma \ref{lemrhosla}. 
We write $\g g=\g l\oplus \g u$ with $\g l$ reductive and $\g u$
the unipotent radical.

$(i)$ Same as for Lemma \ref{lemrhosla}.

$(ii) \Longrightarrow $ Same  as for Lemma \ref{lemrhosla}, but note that
for $\g h=\g b\oplus \g u$ with $ \g b$  a Borel subalgebra of $\g l$,
one has 
$\rho_\gs l=2\, \rho_{\gs l/\gs b} =2\,\rho_{\gs g/\gs h}$.

$(ii) \Longleftarrow $  
We may assume that $\g h=[\g h,\g h]$.
Let $\g q$ be the normalizer of $\g h$.
By assumption $\g q$ is a parabolic Lie subalgebra of $\g g$
and $\g h$ is an ideal of $\g q$.
Let $\g g_0$ be a parabolic subalgebra of $\g q$ containing $\g h$
and which is minimal with this property.
We can write $\g g_0=\g l_0\oplus \g u_0$ and $\g h=\g s\oplus \g v$,
where $\g l_0$
is a reductive Lie algebra, where $\g u_0$ is the unipotent radical of $\g g_0$,
where $\g s:=\g h\cap\g l_0$ is an ideal of $\g l_0$ and 
where $\g v:=\g h\cap \g u_0$.
By assumption one has $Rho(\g g,\g h)$.
Then, by the equivalence $(ii)\Leftrightarrow (iv)$ 
in Proposition \ref{proredpar} one also has
$Rho(\g g_0,\g h)$ i.e. 
$$
\rho_{\gs l_0}\leq 2\, \rho_{\gs g_0/\gs h}
\;\;\mbox{ as a function on $\g s$.}
$$
But since $\g h$ is an ideal in $\g g_0$, the right hand side is null and
this inequality can be rewritten as
$\rho_\gs s\leq 0$. This tells us that $\g s$ is abelian and $\g h$ is solvable.
\end{proof}

\subsection{Sla and Rho}
\label{secslarho2}

We are now able to prove the last remaining implication
\eqref{eqnslarho} by proving the following stronger 
Proposition \ref{proslarho2}  
which is the converse to Proposition \ref{prorhosla2}. 

\bp
\label{proslarho2}
Let $\g g$ be a complex algebraic Lie algebra and $\g h\subset \g g$
be a complex Lie subalgebra. Then, one has the implication
\begin{eqnarray*}
Sla(\g g,\g h)
&\Longrightarrow&
Rho(\g g, \g h)\, .
\end{eqnarray*}
\ep

\begin{proof}[Beginning of proof of Proposition \ref{proslarho2}]
The proof of Proposition \ref{proslarho2} 
will be by induction on the dimension of $\g g$,
reducing to the case where both $\g g$ and $\g h$ are semisimple
that we discussed in Proposition \ref{proequcon}.
Using Lemma \ref{lemghghh} and Theorem \ref{thmgh}, 
we can assume that $\g h=[\g h,\g h]$.  
In Proposition \ref{proredpar} and Lemma \ref{lemredpar},
we have introduced an intermediate algebraic complex Lie algebra
$\g h\subset \g g_0\subset \g g$
such that the unipotent radical $\g v$ of $\g h$
is included in the unipotent radical $\g u_0$ of $\g g_0$,
and for which we have the equivalences~:
\begin{eqnarray*}
Rho(\g g,\g h)
\Longleftrightarrow
Rho(\g g_0,\g h)
&{\rm and}&
Sla(\g g,\g h)
\Longleftrightarrow
Sla(\g g_0,\g h).
\end{eqnarray*}
The proof will go on for two more sections.
\end{proof}

\subsection{Pushing down the Sla condition}
\label{secconsla}

\bq
We sum up
the previous notation.
\eq

\centerline{\bf Notation}
\noindent
Let $G_0=L_0U_0$ be an algebraic complex Lie group,\\
where $L_0$ is reductive and $U_0$ is the unipotent radical of $G_0$.\\
Let $H=SV$ be a connected algebraic complex Lie subgroup,\\
where $S$ is reductive and $V$ is the unipotent radical of $H$.\\
Assume that $S\subset L_0$ and $V\subset U_0$, 
and let $W_0:=U_0/V$.\\
For $w$ in $W_0$, we denote by $S_w$ the stabilizer of $w$ in $S$.\\
Let $\g g_0$, $\g h$,..., $\g s_w$ be the corresponding Lie algebras.

\bl
\label{lemdownsla}
Keep this notation. If $\g h$ satisfies $Sla(\g g_0,\g h)$,
then there exists a non-empty Zariski open set $W_0'\subset W_0$ 
such that for all $w$ in $W_0'$, $\g s_w$ satisfies $Sla(\g l_0,\g s_w)$  
\el

\begin{proof}[Proof of Lemma \ref{lemdownsla}]
By Lemma \ref{lemuvw}, 
there exists an $S$-invariant 
vector subspace $\g m\subset \g u_0$ such that 
$\g u_0=\g m\oplus \g v$ and the map ${\rm exp}\colon \g m \to W_0=U_0/V$
is a bijection.

By assumption, there exists a sequence $g_n\in G_0$ such that 
the limit
\begin{eqnarray}
\g h_\infty
&:=&
\lim_{n\ra\infty}{\rm Ad} g_n\, \g h
\end{eqnarray}
exists and is a solvable Lie subalgebra of $\g g_0$.

Since $V$ normalizes $\g h$, we can assume that
\begin{eqnarray}
g_n=\ell_n e^{X_n}
&{\rm with}&
\ell_n\in L_0\; {\rm and}\; X_n\in \g m.
\end{eqnarray}
We denote by $w_n\in W_0$ the image  $w_n:={\rm exp }(X_n)$.
The stabilizer $\g s_{w_n}$ of $w_n$ in $\g s$ is also the centralizer
of $X_n$ in $\g s$.
Therefore, one has the equality  
\begin{eqnarray}
{\rm Ad} e^{X_n}\, \g s_{w_n}
&=&
\g s_{w_n}.
\end{eqnarray}
Therefore, after extraction the limit
$
\g s_\infty
:=
\lim\limits_{n\ra\infty}{\rm Ad} \ell_n\, \g s_{w_n}
$
exists and is a Lie subalgebra of $\g h_\infty$.
In particular, this limit $\g s_\infty$ is solvable.
Therefore there exists a maximal unipotent Lie algebra $\g n_0$
of $\g l_0$ such that 
$$
\g s_\infty\cap \g n_0=\{ 0\},
$$
and, for $n$ large, one also has 
$
{\rm Ad}\ell_n\g s_{w_n}\cap \g n_0=\{ 0\}.
$
We have found at least one point $w_0$ in $W_0$
whose stabilizer $\g s_{w_0}$ is transversal to a maximal 
unipotent subalgebra $\g n$ of $\g l_0$. 
For such a subalgebra $\g n$ the set 
$$
W_0':=\{w\in W_0\mid \g s_w\cap \g n=\{0\}\}
$$
is a non-empty Zariski open subset of $W_0$. 
 
By the equivalence of  $Sla$ and $Tmu$
proven in Proposition \ref{proslatmu},
and since $\g l_0$ is reductive, for all  $w$ in $W_0'$,
the stabilizer $\g s_w$ satisfies $Sla(\g l_0,\g s_w)$.
\end{proof}

\subsection{Pushing up the Rho condition}
\label{secindrho}

\bq
We now explain how a disintegration argument
allows us to push  the $Rho$-condition 
from 
$(\g l_0,\g s_w)$ up to $(\g g_0,\g h)$.
It is very surprising that we need this analytic argument 
to relate these two algebraic conditions.
\eq

\begin{proof}[End of proof of Proposition \ref{proslarho2}] 
We keep the notation of Sections \ref{secparsub} and \ref{secconsla}, 
and we go on the proof by induction on the dimension of $G$.
\vs

{\bf First case : $L_0\neq  G$. } 
We want to prove the condition $Rho(\g g,\g h)$.
We first check that the regular representation of $L_0$
in $L^2(L_0\times_S W_0)$ is tempered.
We argue as in the second case of Section \ref{secsemgro}.
As in \eqref{eqnregint0}, we write the representation 
$L^2(L_0\times_S W_0)$ as an integral of
$L^2(L_0/S_w)$
so that we only need to prove that, for Lebesgue almost all $w$ in $W_0$,
the representation
\begin{equation}
\label{eqnghvtem}
L^2(L_0/S_w) 
\;\;\mbox{is $L_0$-tempered.}
\end{equation}
Note that the non-empty Zariski open set $W'_0$
introduced in Lemma \ref{lemdownsla}  has full Lebesgue measure.
We have seen in Lemma \ref{lemdownsla} that 
$$
\mbox{
$\g s_w$ satisfies $Sla(\g l_0,\g s_w)$,\;\; for all $w$ in  $W'_0$.}
$$
Since $\dim L_0 <\dim G$, our induction assumption
implies that 
$$
\mbox{
$\g s_w$ satisfies $Rho(\g l_0,\g s_w)$,\;\; for all $w$ in  $W'_0$.}
$$
And therefore by Theorem \ref{thmgh}, 
$$
\mbox{
$\g s_w$ satisfies $Tem(\g l_0,\g s_w)$,\;\; for all $w$ in  $W'_0$.}
$$
This proves \eqref{eqnghvtem} and the representation  of $L_0$ in
$L^2(L_0\times_S W_0)$ is tempered.

Finally, using Proposition \ref{proredpar}, 
one deduces that $L^2(G/H)$ is $L_0$-tempered, 
or equivalently $\g h$ satisfies $Rho(\g g,\g h)$.
\vs

{\bf Second case : $L_0= G$. }
In this case both $G$ and $H$ must be 
reductive. As we have seen in Lemma \ref{lemghghh},
we can assume that  $\g h=[\g h,\g h]$.
We can also assume that $\g g=[\g g,\g g]$.
Therefore one is reduced to the case where both $\g g$ and $\g h$
are semisimple which was settled in Proposition \ref{proequcon}.
This ends the proof of Proposition \ref{proslarho2}.
\end{proof}

This also ends simultaneously the proofs of Theorems \ref{thmtemorb}, \ref{thmequcon} and \ref{thmtemrhosla}.   

\subsection{Comments and perspectives}
\label{seccomper}

\bq
We conclude by a few remaining questions
\eq

\subsubsection{Openness of the Sla condition}
\begin{question}
\label{conslaope}
Let $\g g$ be a complex Lie algebra.
Is the set of Lie subalgebras $\g h$ 
satisfying $Sla(\g g, \g h)$  an open set? 
\end{question}

We have seen that this set is closed in Remark \ref{remslaclo}
and we have seen that this set is open when $\g g$ is semisimple 
in Corollary \ref{corequcon2}.

\subsubsection{Regular finite-dimensional representation}
Let $\g g$ be a complex semisimple Lie algebra and $\g h$ be a complex Lie subalgebra. We denote  by  $Irr(\g g)_{reg}$ 
the set of finite-dimensional 
irreducible represen\-tations $V$ of $\g g$ whose highest weight
is regular.  
We now consider the  condition 
\begin{eqnarray*}
\label{eqnregrep}	
Rep(\g g,\g h)&:&
\mbox{there exists\; $V\in Irr(\g g)_{reg}$  such that 
$\m P(V)^{\gs h}\neq\emptyset$}.
\end{eqnarray*} 

\begin{question}
\label{conagsorb}
Does one have  the equivalence 
$Rep(\g g, \g h)\Leftrightarrow Orb(\g g, \g h)$ ?
\end{question}
\noindent
We know  that the implication $\Longrightarrow$ is true.\\
We also know that
the converse $\Longleftarrow$ is true
when $\g h$ is reductive. 

\subsubsection{Parabolic induction of tempered representation}
The strategy we followed in this series of paper could be simplified
if we knew the answer to the following

\begin{conjecture} 
	\label{conjparabolic}
	Let $G$ be a real algebraic semisimple group, $Q=LU$ be a parabolic subgroup, 
	and $\pi$ be a unitary representation of $Q$. Does one have
	$$
	\mbox{\rm $\pi$ is $L$-tempered}
	\Longleftrightarrow
	\mbox{\rm ${\rm Ind}_Q^G \pi$ is $G$-tempered.} 
	$$ 
\end{conjecture}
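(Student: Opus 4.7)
The plan is to treat the two directions by rather different means, using the Cowling--Haagerup--Howe criterion (Proposition \ref{protemal2}) as the common thread; the forward direction is essentially classical, while the converse is the genuinely new content.

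For the direction $\Longrightarrow$, I would realize $\Ind_Q^G \pi$ on sections over $K/(K\cap Q)$ via the Iwasawa decomposition $G=KQ$. For a $K$-finite vector $v$ represented by $f\colon K\to \mc H_\pi$, the matrix coefficient $\langle \Ind_Q^G\pi(g)v,v\rangle$ is an explicit integral over $K$ of $\langle \pi(q(k,g))f(k'),f(k)\rangle$ against an $e^{-\rho_Q}$-Jacobian, where $q(k,g)$ is the $Q$-part of the Iwasawa decomposition of $kg$. Assuming $\pi$ is $L$-tempered, one bounds the integrand pointwise via Proposition \ref{protemal2} by $\Xi_L$ of the Levi-component of $q(k,g)$, and invokes Harish-Chandra's integral formula $\Xi_G(g)=\int_K \Xi_L(l(kg))\,e^{-\rho_Q(H(kg))}\,dk$ to conclude a bound $|\langle \Ind_Q^G\pi(g)v,v\rangle|\le C\,\Xi_G(g)$, whence $G$-temperedness by Proposition \ref{protemal2} again.

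For the direction $\Longleftarrow$, the plan is an asymptotic concentration argument. Assume $\Ind_Q^G \pi$ is $G$-tempered. Choose a vector $f_v$ in the induced picture, supported in a tiny $K\cap Q$-neighborhood of the identity coset and pointing in the direction of a test vector $v\in\mc H_\pi$. For $g=\ell\in L$ deep in the positive chamber, the Iwasawa cocycle $q(k,\ell)$ localizes and the induced matrix coefficient reduces, up to the modular factor, essentially to $\langle \pi(\ell)v,v\rangle$. Combining this with the $G$-tempered bound and the fact that on $L$ the spherical functions are related by $\Xi_G(\ell)$ comparable to $\Xi_L(\ell)\,e^{-2\rho_U(\ell)}$ up to polynomial factors, one would obtain $|\langle\pi(\ell)v,v\rangle|\le C'(1+\|\log\ell\|)^d\,\Xi_L(\ell)$ on a Weyl chamber of $L$; an averaging argument over $K\cap L$ then gives the same bound globally on $L$, yielding $L$-temperedness.

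The main obstacle is precisely the localization step in $\Longleftarrow$: extracting a clean matrix coefficient of $\pi$ from that of $\Ind_Q^G\pi$ requires controlling the cocycle $q(k,\ell)$ uniformly in $\ell$, which is delicate near the walls of the Weyl chamber and outside a deep positive direction. An alternative route is disintegration: decompose $\pi$ into irreducibles, reduce to the case of irreducible $\pi$, and appeal to the Langlands classification together with the Plancherel-theoretic characterization of tempered spectra. Making such a disintegration compatible with the unitary structure of $\Ind_Q^G$ for a general (possibly non-type-I) unitary representation $\pi$ of $Q$ is presumably why the authors leave this as a conjecture rather than a theorem.
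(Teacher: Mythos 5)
This is stated in the paper as an open \emph{conjecture}, not a theorem: the authors explicitly record that only the implication $\Longleftarrow$ is known in general, that the implication $\Longrightarrow$ is known when $\pi|_U$ is trivial (via Lemma \ref{lemparsub2}), and that $\Longrightarrow$ is also known for $G={\rm SL}(n,\mathbb R)$ and ${\rm SL}(n,\mathbb C)$. So there is no paper proof to compare against; the question is whether your proposal closes the gap.

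You have the difficulty exactly backwards. You label $\Longrightarrow$ (from $L$-temperedness of $\pi$ to $G$-temperedness of ${\rm Ind}_Q^G\pi$) as ``essentially classical'' and $\Longleftarrow$ as ``the genuinely new content.'' It is the other way around. The implication $\Longleftarrow$ is the easy one: $({\rm Ind}_Q^G\pi)|_L$ contains $\pi|_L$ via the trivial $(Q,L)$ double coset, and restriction of a $G$-tempered representation to a closed reductive subgroup $L$ is $L$-tempered (since $L^2(G)|_L$ is a multiple of $L^2(L)$); no delicate asymptotic concentration near a chamber wall is needed. The hard direction is $\Longrightarrow$, and your argument for it has a genuine gap. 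After realizing the induced representation on $K/(K\cap Q)$, the integrand is $\langle \pi(q(k,g))f(k'),f(k)\rangle$ where $q(k,g)\in Q$ has both a Levi and a unipotent component. The Cowling--Haagerup--Howe bound (Proposition \ref{protemal2}) controls $|\langle\pi(\ell)v,v\rangle|$ by $\Xi_L(\ell)$ only for $\ell\in L$ and $K_L$-finite $v$. Writing $q=\ell u$ and $\langle\pi(q)v,v\rangle=\langle\pi(\ell)\pi(u)v,v\rangle$, the vector $\pi(u)v$ is no longer $K_L$-finite in any controlled way, so the CHH bound simply does not apply; there is no pointwise majorization of the integrand by $\Xi_L$ of the Levi part of the Iwasawa cocycle. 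When $\pi|_U$ is trivial this obstruction vanishes and the argument works --- that is exactly the content of Lemma \ref{lemparsub2}. When $\pi|_U$ is nontrivial, this is precisely the missing idea that makes the conjecture open, and your proposal does not supply it.
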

\noindent
We know that the implication $\Longleftarrow$ is true.\\
We have seen  the implication $\Longrightarrow$ 
when $\pi|_U$ is trivial  in Lemma \ref{lemparsub2}.\\ 
We know the implication $\Longrightarrow$ 
when $G={\rm SL}(n,\m R)$ and ${\rm SL}(n,\m C)$.

{\small

}

{\small
	\noindent
	Y. \textsc{Benoist} :
	CNRS-Universit\'e Paris-Saclay, Orsay, France\newline
	e-mail : \texttt{yves.benoist@u-psud.fr}
	
	\medskip
	\noindent
	T. \textsc{Kobayashi} :
	Graduate School of Mathematical Sciences and 
	Kavli IPMU (WPI), The University of Tokyo, Komaba,  Japan\newline
	e-mail : \texttt{toshi@ms.u-tokyo.ac.jp}
}

\end{document}